\documentclass[12pt]{article}

\usepackage[T1]{fontenc}
\usepackage[active]{srcltx}
\usepackage{lmodern}
\usepackage{amsmath,amstext,amssymb,graphicx,graphics,color}
\usepackage{mathtools,mathrsfs}
\usepackage{enumitem}
\usepackage{tikz}
\usetikzlibrary{decorations.pathmorphing}
\usepackage{mleftright}
\usepackage{theorem}
\usepackage{cite}               
\usepackage{hyperref}           
\usepackage{bbm}                
\usepackage{fancybox}           
\usepackage[section]{placeins}  
\usepackage{mathrsfs}
\usepackage{subcaption}
\allowdisplaybreaks
\theorembodyfont{\normalfont\slshape} 

\newtheorem{theorem}{Theorem}
\newtheorem{proposition}{Proposition}[section]

\newtheorem{lemma}[proposition]{Lemma}
\theoremstyle{break} 

\newenvironment{proof}%
{{\par\noindent \bf Proof. \nobreak}}%
{\nobreak \removelastskip \nobreak \hfill $\Box$ \medbreak}
{{\par\noindent \bf Proof \nobreak}}%
{\nobreak \removelastskip \nobreak \hfill $\Box$ \medbreak}
{{\par\noindent \bf Proof lemma. \nobreak}}%
{\nobreak \removelastskip \nobreak \bf End proof lemma. \medbreak}
\newenvironment{remark}{\par \medskip \noindent {\bf Remark. }\nobreak}{\par \medskip}
\usepackage{fancyhdr}
 \fancypagestyle{plain}{
  \fancyhead{}
  
}

\def\paragraph#1{{\bf #1\ }}
\newcommand{\RN}[1]{%
  \textup{\uppercase\expandafter{\romannumeral#1}}%
}
\newcommand{\expo}{\mathrm{e}}

\newcommand{\dd}{\mathrm{d}}

\newcommand{\emp}{\mathrm{emp}}

\newcommand{\HH}{\mathrm{H}}
\newcommand{\overbar}[1]{\mkern 1.5mu\overline{\mkern-1.5mu#1\mkern-1.5mu}\mkern 1.5mu}
\renewcommand{\RN}[1]{%
  \textup{\uppercase\expandafter{\romannumeral#1}}%
}
\DeclareMathOperator*{\argmin}{\arg\!\min}
\usepackage[utf8]{inputenc}
\usepackage{unicode_character}

\def\Proof{\noindent{\bf Proof}\quad}
\def\qed{\hfill$\square$\smallskip}

\title{Wealth exchange under ceiling and flooring constraints: a modified Bennati–Dragulescu–Yakovenko model}

\author{Fei Cao \footnotemark[1] \and Sebastien Motsch \footnotemark[2] \and Wendy Garcia Umbarita \footnotemark[2]}

\begin{document}
\maketitle

\footnotetext[1]{Amherst College - Department of Mathematics, Amherst, MA 01002, USA}
\footnotetext[2]{Arizona State University - School of Mathematical and Statistical Sciences, 900 S Palm Walk, Tempe, AZ 85287, USA}

\tableofcontents

\begin{abstract}
We investigate the classical Bennati-Dragulescu-Yakovenko (BDY) dollar exchange model introduced in \cite{dragulescu_statistical_2000} where the effects of wealth ceiling and wealth flooring are explored. In our model, $N$ identical economical agents involved in the BDY game are also subjected to certain policies issued by a (artificial) government, which prevent agents whose wealth exceeds some prescribed threshold value (denoted by $b \in \mathbb N_+$) from receiving money and which prohibit agents whose wealth falls below certain threshold value (denoted by $a \in \mathbb N$) from giving out their money. We derive a mean-field system of coupled nonlinear ordinary differential equations (ODEs) governing the evolution of the distribution of money as the number of agents $N$ tends to infinity and study the large time behavior of the resulting ODE system. The impact of a wealth cap and a wealth floor on economic inequality (measured by the Gini index) will also be explored numerically.
\end{abstract}

\noindent {\bf Keywords: Agent-based model, Econophysics, Mean-field limit, Multi-agent dynamics, Gini index, Wealth inequality}

\section{Introduction}\label{sec:1}
\setcounter{equation}{0}

In this work, we study a variant of the classical Bennati-Dragulescu-Yakovenko (BDY) dollar exchange model introduced in \cite{dragulescu_statistical_2000}. In the original BDY model, there are $N$ identity agents labeled by $1$ through $N$ and each of them is characterized by the amount of dollars he/she has. We denote by $S_i(t)$ the amount of dollars agent $i$ has at time $t$. The game is a simple mechanism for dollar exchange taking place in a closed economical system/market, where at random times (generated by an exponential law) an agent $i$ picked uniformly at random gives a dollar (if he/she has at least one dollar) to another agent $j$ again picked uniformly at random, and if agent $i$ is ruined (i.e., $S_i= 0$) then nothing happens. We can represent the BDY game as follows:
\begin{equation}\label{dynamics:BDY}
\textbf{BDY game:} \qquad (S_i,S_j)~ \begin{tikzpicture} \draw [->,decorate,decoration={snake,amplitude=.4mm,segment length=2mm,post length=1mm}]
(0,0) -- (.6,0); \node[above,red] at (0.3,0) {};\end{tikzpicture}~  (S_i-1,S_j+1) \quad (\text{if } S_i\geq 1).
\end{equation}
Since the economical system is closed, we must have
\begin{equation}\label{eq:preserved_sum}
S_1(t)+ \cdots +S_N(t) = N\mu \qquad \text{for all } t\geq 0.
\end{equation}
for some $\mu > 0$ which represents the average wealth per agent. For simplicity we will enforce $\mu \in \mathbb N_+$ to be a fixed positive integer. The BDY model described above is one of the earliest models in econophysics and has been studied extensively across different communities since its inception \cite{cao_derivation_2021,cao_interacting_2022,lanchier_rigorous_2017,merle_cutoff_2019}. Due to the fact that all agents with at least one dollar give to the rest of agents at a fixed rate and the game is biased towards any specific agent (or certain group of agents), the BDY model is termed the \emph{unbiased exchange model} in \cite{cao_derivation_2021,cao_interacting_2022} and the \emph{one-coin model} in \cite{lanchier_rigorous_2017}. Subsequent extensions of the basic BDY game suggest the presence of a bank (or even multiple banks) which allows agents to be indebted (i.e., $S_i < 0$), and we refer the interested readers to a series of recent works \cite{cao_bias_2023,cao_uncovering_2022,lanchier_rigorous_2019,lanchier_distribution_2022}. Other natural extensions of the BDY dynamics include the introduction of bias, modeled for instance through taxation or redistribution mechanisms \cite{cao_derivation_2021,cao_uniform_2024,miao_nonequilibrium_2023}, which favor either poorer or richer agents in pairwise exchanges. Another possible variation introduces probabilistic cheaters \cite{blom_hallmarks_2024,cao_mean_2025}, in which agents may be honest or with positive probability misrepresent their wealth by claiming to have no money to give, thereby securing an advantage through concealment. Lastly, a continuous and infinitesimal version of the classical BDY model, in the form of a nonlinear partial differential equation coupled with a nonlinear Robin-type boundary condition, has been derived and analyzed in a very recent work \cite{cao_bennati_2025}.

In this manuscript, we aim to introduce and investigate a version of the standard BDY model which incorporates a wealth cap and a wealth floor for each agent. For this purpose, we introduce two additional model parameters $b \in \mathbb N_+$ and $a \in \mathbb{N}$ such that
\begin{equation}\label{eq:condition}
a < \mu < b.
\end{equation}
Our modified agent-based BDY model can be summarized as follows:
\begin{equation}\label{dynamics:modified_BDY}
\textbf{Modified BDY:} \qquad (S_i,S_j)~ \begin{tikzpicture} \draw [->,decorate,decoration={snake,amplitude=.4mm,segment length=2mm,post length=1mm}]
(0,0) -- (.6,0); \node[above,red] at (0.3,0) {};\end{tikzpicture}~  (S_i-1,S_j+1) \quad (\text{if } S_i > a ~\text{and}~ S_j < b  ).
\end{equation}
In other words, $a$ and $b$ can be viewed, respectively, as a wealth floor which prevents agents with insufficient wealth from giving a dollar, and a wealth cap which prevents very wealthy agents from receiving one as illustrated in Figure \ref{fig:illustration_model}. The introduction of the parameters $a$ and $b$ can be interpreted as a government policy designed to mitigate economic inequality among agents, as quantified by the Gini index. From now on, we classify agents according to their wealth: agent $i$ is said to be poor (lower class) if $S_i \leq a$, rich (upper class) if $S_i \geq b$, and middle class if $a < S_i < b$. Therefore, the modified BDY dynamics \eqref{dynamics:modified_BDY} introduced in this work partition agents into three classes (based solely on their wealth), hence we term the model as a \textbf{three-classes BDY model}. It is worth emphasizing that our model boils down to the classical BDY model \eqref{dynamics:BDY} in the particular case when $a = 0$ and $b \to +\infty$.

\begin{figure}[ht]
    \centering
    \includegraphics[width=.9\textwidth]{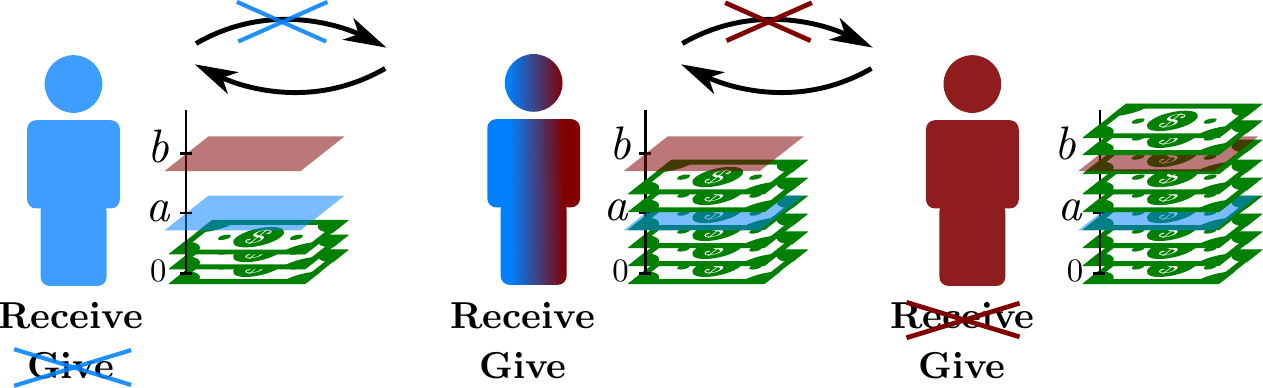}
    \caption{In the proposed binary exchange dynamics, an agent with less than $a$ dollars ({\it poor} agent) represented on the left, can no longer give. Similarly, an agent with more than $b$ dollars ({\it rich} agent) represented on the right, cannot receive. The middle class consists of agents whose wealth lies between $a$ and $b$ (represented in the middle), they can both give and receive money.}
    \label{fig:illustration_model}
\end{figure}

To the best of our knowledge, this is the first work to incorporate both a wealth cap and a wealth floor into an econophysics model and to analyze it from a mathematical point of view. From a policy-making perspective, the concept of Universal Basic Income (UBI) has recently gained significant attention, driven by increasing concerns over job displacement resulting from the deployment of advanced AI technologies. Thus, our work can also be viewed as a first step toward exploring the impact of specific policies on the overall wealth distribution in an artificial society.

Although the present work focuses on a particular binary exchange model, alternative trading mechanisms can be formulated and analyzed, giving rise to distinct dynamics. For instance, in the immediate exchange model \cite{heinsalu_kinetic_2014}, pairs of agents randomly exchange independent and uniformly distributed fractions of their wealth. Another well-studied variant is the uniform reshuffling model \cite{dragulescu_statistical_2000,cao_entropy_2021,matthes_steady_2008} and its variants \cite{cao_binomial_2024,chakraborti_statistical_2000,chatterjee_pareto_2004}, in which the combined wealth of two randomly chosen agents before interaction is uniformly redistributed between them afterward. Additional models originating from econophysics can be found in \cite{during_kinetic_2008,pareschi_interacting_2013,pereira_econophysics_2017,savoiu_econophysics_2013}.

The remainder of this manuscript is structured as follows. In section \ref{sec:2}, we derive the mean-field system of nonlinear ODEs corresponding to the agent-based model \eqref{dynamics:modified_BDY} and identify its unique equilibrium distribution. Section \ref{sec:3} is devoted to analyzing the large time behavior of the mean-field ODE system, where we establish convergence of solutions to the unique equilibrium via the introduction of a novel generalized entropy functional. Section \ref{sec:4} concludes the paper, and Appendix \ref{sec:Appendix} contains the proofs of several auxiliary results.

\section{Mean-field limit}\label{sec:2}
\setcounter{equation}{0}

\subsection{Derivation of the mean-field ODE system}

Let ${\bf p}(t)=\left(p_0(t),p_1(t),\ldots\right)$ be a probability mass function, where $p_n(t)$ represents the (deterministic) proportion of agents with $n$ dollars at time $t$ in the large population limit $N \to \infty$. In other words, $p_n(t) = \lim_{N \to \infty} \frac{1}{N}\,\sum_{i=1}^N \mathbbm{1}_{\{S_i(t) = n\}}$ (where the notion of convergence will be made precise in Theorem \ref{thm:PoC} below). It is natural, from a mean-field perspective, to expect that the evolution of ${\bf p}(t)$ will be governed by the following coupled system of nonlinear ODEs (see figure \ref{fig:illustration_mean_field}:
\begin{equation}\label{eq:law_limit}
\frac{\dd}{\dd t} {\bf p}(t) = \mathcal{L}[{\bf p}(t)],
\end{equation}
where
\begin{equation}\label{eq:L}
\mathcal{L}[{\bf p}]_n \coloneqq \lambda_r\, (\mathbbm{1}_{\{a \leq n\}}\,p_{n+1}  - \mathbbm{1}_{\{a+1 \leq n \}}\, p_{n})
      + \lambda_g\,(\mathbbm{1}_{\{1\leq n\leq b\}}\, p_{n-1} - \mathbbm{1}_{\{ n \leq b-1\}}\, p_{n})
\end{equation}
\bigskip
and
\begin{equation}\label{eq:two-classes}
\lambda_r \coloneqq \sum\limits_{0\leq \ell \leq b-1} p_\ell,~~ \lambda_g \coloneqq \sum\limits_{\ell \geq a+1} p_\ell
\end{equation}

\begin{figure}[ht]
    \centering
    \includegraphics[width=.8\textwidth]{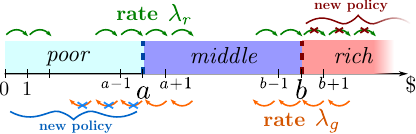}
    \caption{Illustration of the wealth exchange dynamics in the mean-field limit \eqref{eq:law_limit}-\eqref{eq:two-classes} as $N \to \infty$. The population is described through a probability mass function ${\bf p}=(p_0,p_1,\dots)$ whose evolution is prescribed by the rates $λ_g$ and $λ_r$, which represent the proportion of givers and receivers, respectively.}
    \label{fig:illustration_mean_field}
\end{figure}

We emphasize that in the special case where $a = 0$ and $b \to \infty$, the system of nonlinear ODEs \eqref{eq:law_limit}-\eqref{eq:two-classes} coincides with the mean-field ODE system for the usual BDY model. On the other hand, the passage from the stochastic $N$-agent dynamics \eqref{dynamics:modified_BDY} to the infinite system of ODEs \eqref{eq:law_limit} in the large population limit $N \to \infty$ relies on the concept of propagation of chaos \cite{chaintron_propagation_2022,chaintron_propagation_2022_partII,sznitman_topics_1991}, which has been rigorously established for a variety of models from socio-economic sciences (including the classical BDY dynamics)\cite{cao_derivation_2021,cao_entropy_2021,cao_explicit_2021,cao_fractal_2024,cao_interacting_2022,cao_uniform_2024} as well as physical and biological sciences \cite{cortez_quantitative_2016,cortez_uniform_2016,carlen_kinetic_2013}.

To show the convergence of the agent-based dynamics \eqref{dynamics:modified_BDY} to its mean-field dynamical system \eqref{eq:law_limit}-\eqref{eq:two-classes} as the number of agents $N$ tends to infinity, we introduce the empirical distribution associated to our modified BDY model:
\begin{equation}\label{eq:rho_emp}
{\bf p}_{\emp}(t) \coloneqq \frac{1}{N}\,\sum_{n=1}^N \delta_{S_n(t)},
\end{equation}
where $\delta$ denotes a Dirac delta distribution. In addition, for any continuous and bounded test function $\varphi$ we denote $\langle {\bf p}_{\emp}, \varphi \rangle \coloneqq \frac{1}{N}\,\sum_{n=1}^N \varphi(S_i)$. We can prove the following version of the propagation of chaos property:

\begin{theorem}\label{thm:PoC}
Assume that ${\bf p}(t)=\{p_n(t)\}_{n\geq 0}$ is a classical solution of
\eqref{eq:law_limit}-\eqref{eq:two-classes} such that ${\bf p}(0) \in
\mathcal{P}(\mathbb N)$ with mean $\mu \in \mathbb N_+$, and ${\bf p}_{\emp}(t)$
represents the empirical distribution associated to the modified BDY model
\eqref{dynamics:modified_BDY}. Then there exists some universal constant
$C > 0$ such that
\begin{equation}\label{eq:PoC}
\mathbb{E}\left[\|{\bf p}(t) - {\bf p}_{\emp}(t)\|_{\ell^1}\right] \leq \left[\mathbb{E}\left[\|{\bf p}(0) - {\bf p}_{\emp}(0)\|_{\ell^1}\right] + \frac{C\,t\,\sqrt{\log N}}{\sqrt{N}}\right]\,\expo^{8t}
\end{equation}
for all $t\geq 0$. In particular, if $\mathbb{E}\left[\|{\bf p}(0) - {\bf p}_{\emp}(0)\|_{\ell^1}\right] \xrightarrow{N \to 0} 0$, then for any $T > 0$, $\mathbb{E}\left[\|{\bf p}(t) - {\bf p}_{\emp}(t)\|_{\ell^1}\right] \xrightarrow{N \to 0} 0$ for each $t\in [0,T]$.
\end{theorem}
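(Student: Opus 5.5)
Following the approach used for the classical BDY dynamics in \cite{cao_derivation_2021,cao_interacting_2022}, we would compare the empirical measure with the mean-field solution through a Doob–Meyer (martingale) decomposition and then close the estimate with Gronwall's lemma.

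\textbf{Step 1: semimartingale decomposition.} Realize the dynamics \eqref{dynamics:modified_BDY} with Poisson clocks of rate $1/N$ attached to each ordered pair $(i,j)$. For a fixed level $n$, the jump generator applied to $\langle {\bf p}_{\emp},\mathbbm{1}_{\{n\}}\rangle$ reproduces exactly $\mathcal{L}[{\bf p}_{\emp}]_n$. The rates $\lambda_r,\lambda_g$ in \eqref{eq:two-classes} are then evaluated at ${\bf p}_{\emp}$, up to an $O(1/N)$ correction that comes from excluding $i=j$. This gives
\begin{equation*}
{\bf p}_{\emp}(t)={\bf p}_{\emp}(0)+\int_0^t\mathcal{L}[{\bf p}_{\emp}(s)]\,\dd s+{\bf M}(t)+O(t/N),
\end{equation*}
where ${\bf M}=(M_n)_{n\ge0}$ is a family of martingales. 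Subtracting the integrated form of \eqref{eq:law_limit} and taking the $\ell^1$ norm, the difference ${\bf e}(t)={\bf p}(t)-{\bf p}_{\emp}(t)$ satisfies
\begin{equation*}
\|{\bf e}(t)\|_{\ell^1}\le\|{\bf e}(0)\|_{\ell^1}+\int_0^t\big\|\mathcal{L}[{\bf p}]-\mathcal{L}[{\bf p}_{\emp}]\big\|_{\ell^1}\dd s+\sup_{s\le t}\|{\bf M}(s)\|_{\ell^1}+O(t/N).
\end{equation*}

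\textbf{Step 2: Lipschitz bound on $\mathcal{L}$.} The operator \eqref{eq:L} is quadratic, so we write it as $\lambda\,A{\bf p}$ with $\lambda\in\{\lambda_r,\lambda_g\}$ and $A$ the bounded linear part. Then
\begin{equation*}
\lambda A{\bf p}-\tilde\lambda A\tilde{\bf p}=(\lambda-\tilde\lambda)A{\bf p}+\tilde\lambda A({\bf p}-\tilde{\bf p}).
\end{equation*}
The ingredients are:
\begin{itemize}
\item each linear part has $\ell^1$ operator norm at most $2$;
\item $|\lambda-\tilde\lambda|\le\|{\bf p}-\tilde{\bf p}\|_{\ell^1}$;
\item $\lambda,\tilde\lambda\le1$.
\end{itemize}
Each of the two terms (receiving and giving) is therefore Lipschitz with constant $4$, so $\|\mathcal{L}[{\bf p}]-\mathcal{L}[\tilde{\bf p}]\|_{\ell^1}\le 8\|{\bf p}-\tilde{\bf p}\|_{\ell^1}$. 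This produces the factor $\expo^{8t}$.

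\textbf{Step 3: martingale term (main obstacle).} This is the step where the $\sqrt{\log N}$ appears. Because of the cap, each $S_i$ can exceed $b$ only if it already did so initially, but it can never exceed $N\mu$ by \eqref{eq:preserved_sum}. Hence only coordinates $0\le n\le N\mu$ are relevant. Each $M_n$ has jumps of size $O(1/N)$ and total jump rate $O(N)$, so by Doob's inequality $\mathbb{E}\sup_{s\le t}|M_n(s)|^2\le C\,t\,q_n/N$, where $q_n$ is an averaged occupation of levels $n-1,n,n+1$.

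Summing $\sqrt{q_n}$ naively over $N\mu$ levels is too crude. We would instead split the levels:
\begin{itemize}
\item For $n\le K$, use Cauchy–Schwarz, $\sum_{n\le K}\sqrt{q_n}\le\sqrt{3K}$.
\item For the tail $n>K$, bound $\sum_{n>K}|M_n|$ directly by the total mass above $K$ in ${\bf p}_{\emp}$ plus its compensator. This mass is controlled by the conserved mean through Markov's inequality, or better by an exponential-moment bound inherited from the dynamics. Above $b$ nothing can be received, so the tail can only decrease.
\end{itemize}
Choosing $K\sim\log N$ balances the two pieces and yields $\mathbb{E}\sup_{s\le t}\|{\bf M}(s)\|_{\ell^1}\le C\,t\sqrt{\log N}/\sqrt N$, absorbing the $O(t/N)$ term. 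If this tail control only works for $t\ge1$ or fails for small $t$, we would fall back on the bound $C(1+t)$ and adjust constants.

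Finally, taking expectations and applying Gronwall's lemma to $\mathbb{E}\|{\bf e}(t)\|_{\ell^1}$, with the nondecreasing forcing $\mathbb{E}\|{\bf e}(0)\|_{\ell^1}+C\,t\sqrt{\log N}/\sqrt N$, gives \eqref{eq:PoC}. The convergence statement for $t\in[0,T]$ then follows immediately.
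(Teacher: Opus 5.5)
Your Steps 1 and 2 and the closing Gronwall argument match the paper: the $O(1/N)$ remainder is the paper's $\frac1N\mathcal{R}[{\bf p}_{\emp}]$, and $\mathcal{L}$ has Lipschitz constant $8$ for exactly your reasons. But Step 3, the only place where the rate is produced, has a genuine gap.

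\textbf{Why the tail bound fails.} You bound $\sum_{n>K}|M_n(t)|$ by the mass of ${\bf p}_{\emp}$ above $K$ at times $0$ and $t$, plus the compensator. That is, you bound a fluctuation by the size of the mass itself. Under the theorem's hypotheses this mass is controlled only by Markov's inequality, $\sum_{n\ge K}p_{\emp,n}\le \mu/K$, and this is attained up to constants (start a fraction $\mu/(2K)$ of the agents at wealth $K$). With $K\sim\log N$ this leaves an error of order $(1+t)\mu/\log N$. Balancing instead against your unweighted head bound $\sqrt{tK/N}$ gives at best $N^{-1/3}$.

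The exponential-moment bound you fall back on is not available. The theorem assumes only ${\bf p}(0)\in\mathcal{P}(\mathbb N)$ with mean $\mu$, and nothing on ${\bf p}_{\emp}(0)$ beyond total wealth $N\mu$. Lemma~\ref{lem:2} concerns the deterministic ODE and itself needs an initial exponential moment. Your correct remark that mass above $b$ can only decrease merely moves the problem to the initial tail. Your route would work under an extra hypothesis, such as a uniform exponential moment of ${\bf p}_{\emp}(0)$, but that is a weaker theorem.

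\textbf{The missing idea.} Put the conserved mean into the quadratic-variation estimate, not into a tail-mass bound. $M_k$ jumps by at most $2/N$, at rate at most $N(2p_{\emp,k}+p_{\emp,k-1}+p_{\emp,k+1})$, and $\sum_k k\,p_{\emp,k}=\mu$. Hence $\sum_k (k+1)\,\mathbb{E}|M_k(t)|\le Ct$ and $\sum_k (k+1)\,\mathbb{E}|M_k(t)|^2\le Ct/N$. Cauchy--Schwarz with weights $(k+1)^{-1}$ and $(k+1)$ then gives $\sum_{k\le K}\mathbb{E}|M_k(t)|\le\big(\sum_{k\le K}\frac{1}{k+1}\big)^{1/2}(Ct/N)^{1/2}$, while $\sum_{k>K}\mathbb{E}|M_k(t)|\le Ct/K$. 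So the cutoff can be pushed all the way to $K=N$ at the cost of only a harmonic sum. This is where $\sqrt{\log N}$ actually comes from (Lemma~\ref{lem:A1} and Proposition~\ref{prop:martingale_control}), not from an exponential tail beyond $K\sim\log N$. Combined with your own observation that no wealth ever exceeds $N\mu$, the weighted Cauchy--Schwarz over $k\le N\mu$ removes the tail term entirely.

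Your hesitation about small $t$ is justified. This argument, the paper's included once $C$ is replaced by $Ct$ in Lemma~\ref{lem:A1}, gives a martingale bound of order $(\sqrt t+t)\sqrt{\log N/N}$. The factor $t$ in \eqref{eq:PoC} is therefore recovered only for $t\ge 1$.
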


\begin{remark}
Theorem~\ref{thm:PoC} ensures that, in the large population limit $N \to \infty$, the empirical law ${\bf p}_{\emp}(t)$ converges to the deterministic mean-field law ${\bf p}(t)$ in expected total variation distance on any finite time interval.
\end{remark}

We emphasize that Theorem~\ref{thm:PoC} can be proved by a martingale-based technique similar to those employed in \cite{cao_derivation_2021,merle_cutoff_2019}. For the sake of completeness, we provide a self-contained proof in Appendix~\ref{sec:Appendix}. Here, we illustrate numerically how the stochastic agent-based system approaches its deterministic mean-field limit as the population size $N$ increases (for a fixed time).

\begin{figure}[ht]
    \centering
    \includegraphics[width=.58\textwidth]{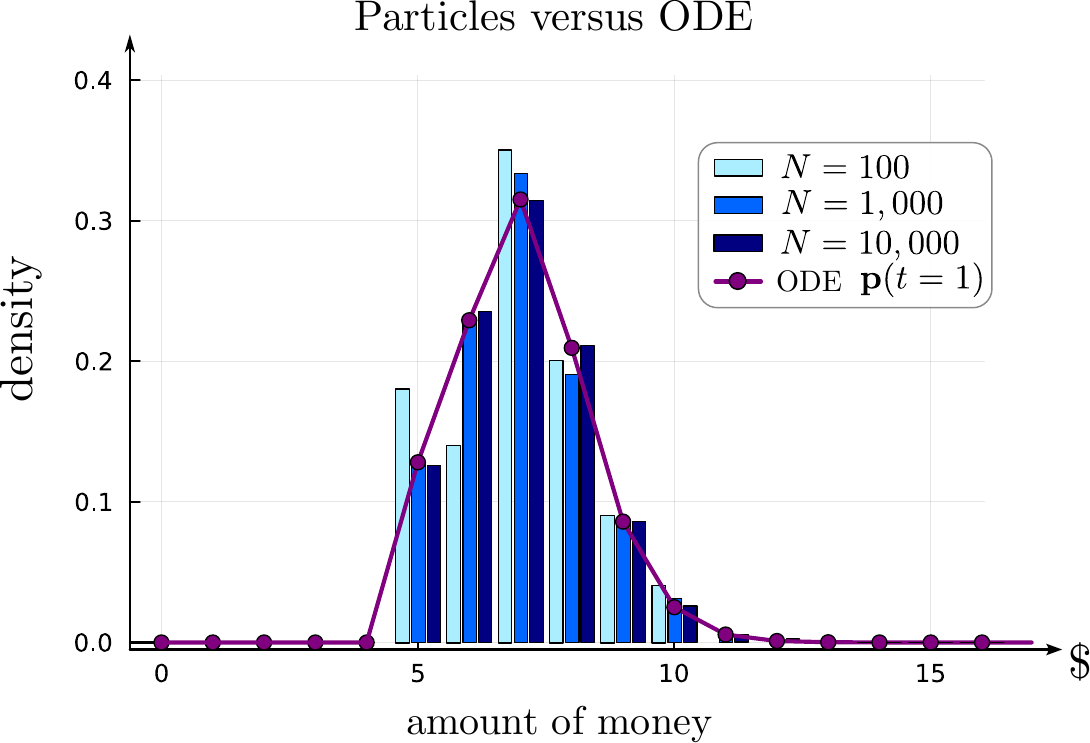}
    \includegraphics[width=.39\textwidth]{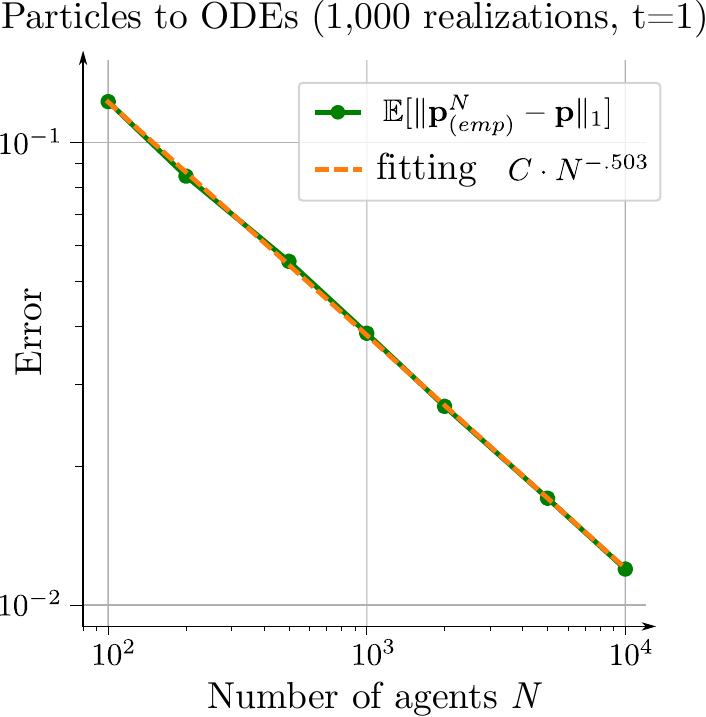}
    \caption{\textbf{Left:} Empirical wealth distributions obtained from particle simulations at time $t = 1$ for different $N$-agents, alongside the mean field ODE solution (solid line). \textbf{Right:} Error between particle distribution for different $N$-agents and ODE solution at $t = 1$ (with the plot being in the log-log scale).}
    \label{fig:Nlimit}
\end{figure}

\subsection{Elementary properties of the solution}

We present some elementary properties of solutions to the mean-field ODE system \eqref{eq:law_limit}-\eqref{eq:two-classes}.
\begin{lemma}\label{lem:invariant}
Assume that ${\bf p}(t)=\{p_n(t)\}_{n\geq 0}$ is a classical solution of \eqref{eq:law_limit}-\eqref{eq:two-classes} such that ${\bf p}(0) \in \mathcal{P}(\mathbb N)$ with mean $\mu \in \mathbb N_+$, then
\begin{equation}\label{eq:conservation_mass_mean_value}
\sum_{n=0}^\infty \mathcal{L}[{\bf p}]_n =0 \quad \textrm{and} \quad \sum_{n=0}^\infty n\,\mathcal{L}[{\bf p}]_n =0.
\end{equation}
In particular, the total probability mass and the mean value are preserved over time.
\end{lemma}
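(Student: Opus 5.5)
The plan is to compute both sums directly by splitting $\mathcal{L}[{\bf p}]_n$ into its ``giving'' part (coefficient $\lambda_r$) and its ``receiving'' part (coefficient $\lambda_g$), and then to reindex so that the terms telescope. Throughout I will use $\sum_n p_n = 1$ and $\sum_n n\,p_n = \mu < \infty$. These justify splitting and reindexing the infinite series, since every term is absolutely summable. The receiving part is supported on $n \le b$, so only the giving part involves a genuinely infinite sum.

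For the mass identity, consider first the giving part. Reindexing with $m=n+1$ gives
\[
\sum_{n\ge a} p_{n+1} \;=\; \sum_{m\ge a+1} p_m \;=\; \lambda_g,
\qquad
\sum_{n\ge a+1} p_n \;=\; \lambda_g .
\]
The two pieces therefore cancel, and the giving part contributes $\lambda_r(\lambda_g-\lambda_g)=0$. The receiving part works the same way: $\sum_{n=1}^{b} p_{n-1} = \sum_{m=0}^{b-1} p_m = \lambda_r$ and $\sum_{n=0}^{b-1} p_n = \lambda_r$. So this part contributes $\lambda_g(\lambda_r-\lambda_r)=0$, and the first identity in \eqref{eq:conservation_mass_mean_value} follows.

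For the first moment, the same reindexing shifts $n$ by one.
\begin{itemize}
\item Giving part: $\sum_{n\ge a} n\,p_{n+1} - \sum_{n\ge a+1} n\,p_n = \sum_{m\ge a+1}(m-1)p_m - \sum_{m\ge a+1} m\,p_m = -\lambda_g$.
\item Receiving part: $\sum_{n=1}^{b} n\,p_{n-1} - \sum_{n=0}^{b-1} n\,p_n = \sum_{m=0}^{b-1}(m+1)p_m - \sum_{m=0}^{b-1} m\,p_m = \lambda_r$.
\end{itemize}
Hence
\[
\sum_{n} n\,\mathcal{L}[{\bf p}]_n = \lambda_r(-\lambda_g) + \lambda_g\lambda_r = 0 .
\]
This is exactly the statement that the rate of dollars leaving givers matches the rate of dollars arriving at receivers: both equal $\lambda_r\lambda_g$.

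The only delicate point is justifying the termwise manipulations, and, for the ``in particular'' clause, exchanging $\frac{\dd}{\dd t}$ with the infinite sums. For the algebra, absolute convergence of each separated series follows from the finite first moment. For the time derivative, I would use the fact that the solution is classical. For $\sum_n p_n$, note that $|\mathcal{L}[{\bf p}]_n| \le p_{n+1}+p_n+p_{n-1}$ for large $n$, and this is uniformly summable. For the mean, I would argue from the integrated form $p_n(t)=p_n(0)+\int_0^t \mathcal{L}[{\bf p}(s)]_n\,\dd s$ and bound the tail of $\sum_n n\,p_n$. Following the convention of the paper's earlier BDY works, I would assume the classical solution keeps a finite first moment locally uniformly in time, which gives dominated convergence. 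This exchange step is the main obstacle; everything else is routine telescoping.
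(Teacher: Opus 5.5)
Your proof is correct and follows the paper's own argument. The paper likewise splits $\mathcal{L}[{\bf p}]_n$ into its $\lambda_r$-part and $\lambda_g$-part, reindexes so that the mass sums cancel, and obtains $-\lambda_r\lambda_g+\lambda_g\lambda_r=0$ for the first moment; your remarks on absolute convergence and on exchanging $\frac{\dd}{\dd t}$ with the series go beyond the paper, which performs these manipulations without comment.
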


\Proof Direct calculations lead us to
\begin{equation*}
\sum_{n=0}^\infty \mathcal{L}[{\bf p}]_n =\sum_{n\geq a} p_{n+1}\,\lambda_r  - \sum_{n\geq a+1} p_n\,\lambda_r + \sum_{1 \leq n \leq b}p_{n-1}\,\lambda_g - \sum_{0 \leq n \leq b-1} p_n\,\lambda_g = 0\\
\end{equation*}
Thus, the total probability mass is conversed. Multiplying the $n$-th component of $\mathcal{L}[{\bf p}]$ by $n$ and summing over $n \in \mathbb N$ yields
\begin{align*}
\sum_{n=0}^\infty n\,\mathcal{L}[{\bf p}]_n &= \lambda_r\sum_{n\geq a} n\,p_{n+1} - \lambda_r\sum_{n\geq a+1} n\,p_{n} + \lambda_g\sum_{1 \leq n \leq b} n\,p_{n-1}- \lambda_g\sum_{0 \leq n \leq b-1} n\,p_{n} \\
&= -\lambda_r\sum\limits_{n \geq a+1} p_n + \lambda_g\sum\limits_{0\leq n\leq b-1} p_n \\
&= -\lambda_r\lambda_g + \lambda_g\lambda_r = 0.
\end{align*}
Thus the proof is completed. \qed

Thanks to Lemma \ref{lem:invariant}, we deduce that ${\bf p}(t) \in V_\mu$ for all $t\geq 0$, where
\begin{equation}\label{eq:space}
V_\mu \coloneqq \left\{{\bf p} \mid \sum_{n=0}^\infty p_n =1,~p_n \geq 0,~\sum_{n=0}^\infty n\,p_n =\mu\right\}
\end{equation}
is the space of probability mass functions with the given mean value $\mu$.

In order to identify the equilibrium distribution, denoted by ${\bf p}^*$, of the system of nonlinear ODEs \eqref{eq:law_limit}-\eqref{eq:two-classes}, we set $\mathcal{L}[{\bf p}^*]_n = 0$. Since the mean value of equilibrium distribution is $\mu$ and satisfies $a<\mu<b$, the rates $\lambda_r$ and $\lambda_g$ are strictly positive for all times.

A candidate for the equilibrium distribution is given by the formula: 
\begin{equation}\label{eq:equi}
    p^*_n =   \left\{
    \begin{array}{ll}
        0 &\quad \text{if } 0 \leq n < a, \\
        \overbar{r}^{n-a}\,p^*_a &\quad \text{if } a \leq n \leq b,\\
        0 &\quad \text{if } n > b,
    \end{array}
    \right.
\end{equation}
where $\overbar{r} \in \mathbb{R}_+$ and $p_a^*$ are chosen such that ${\bf p}^* \in V_\mu$ is a probability mass function with the prescribed mean $\mu$. We refer to the number $\overbar{r}$ as the \emph{common ratio}. Heuristically speaking, ${\bf p}^*$ is a truncated geometric-type distribution supported on a possibly bounded domain. 
To determine $\overbar{r}$ and $p_a^*$, we first express $p_a^*$ as a function of $\overbar{r}$ using the condition that ${\bf p}^*$ is a probability mass on $\mathbb N$:
\[1=\sum\limits_{n\geq 0} p^*_n = \sum\limits_{n=a}^b p^*_n = p^*_a\,\frac{1-\overbar{r}^{b+1-a}}{1-\overbar{r}}, \]
which leads us to
\begin{equation}
\label{eq:p_star_r}
p^*_a = \frac{1-\overbar{r}}{1-\overbar{r}^{b+1-a}}.
\end{equation}
To find the common ratio $\overbar{r}$, we rely on the fact that the mean value of the distribution ${\bf p}^*$ is $\mu$:
\begin{align*}
\mu = \sum\limits_{n\geq 0} n\,p^*_n &= 
\frac{p^*_a}{\overbar{r}^a}\,\sum\limits_{n=a}^b n\,\overbar{r}^n\\
&= \frac{p^*_a}{\overbar{r}^a}\,\frac{\overbar{r}}{(1-\overbar{r})^2}\left[a\,\overbar{r}^{a-1} +
(1-a)\,\overbar{r}^a-(b+1)\,\overbar{r}^b+ b\,\overbar{r}^{b+1}\right]
\end{align*}
Employing \eqref{eq:p_star_r} to replace $p_a^*$ yields
\begin{align*}
\mu &=\frac{1-\overbar{r}}{\overbar{r}^a\,(1-\overbar{r}^{b+1-a})}\,\frac{\overbar{r}}{(1-\overbar{r})^2}\left[a\,\overbar{r}^{a-1} +
(1-a)\,\overbar{r}^a-(b+1)\,\overbar{r}^b+ b\,\overbar{r}^{b+1}\right].
\end{align*}
Consequently, we deduce that $\overbar{r} \in \mathbb{R}_+$ must satisfy
\begin{equation}\label{eq:poly}
(b-\mu)\,\overbar{r}^{b+2-a} + (\mu-b-1)\,\overbar{r}^{b+1-a} + (\mu-a+1)\,\overbar{r} + a - \mu = 0.
\end{equation}
We now state a technical result regarding the (positive) roots of the polynomial defined by the left hand side of \eqref{eq:poly}.
\begin{proposition}\label{prop:polyroot}
Given $0 \leq a < \mu < b$ with $a, \mu, b \in \mathbb N$, and define for $x \geq 0$ the polynomial function
\begin{equation}\label{eq:f}
f(x) =  (b-\mu)\,x^{b+2-a} + (\mu-b-1)\,x^{b+1-a} + (\mu-a+1)\,x + a - \mu.
\end{equation}
Then \begin{enumerate}[label=(\roman*)]
\item when $\mu<\frac{a+b}{2}$, $f(x)$ has a unique root within $(0,1)$,
\item when $\mu=\frac{a+b}{2}$, $f(x)$ has a unique root at $x= 1$,
\item when $\mu>\frac{a+b}{2}$, $f(x)$ has a unique root within $(1,\infty)$.
\end{enumerate}
\end{proposition}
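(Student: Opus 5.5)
The key observation is that $x=1$ is always a root of $f$; indeed it is a double root, corresponding to the degenerate ratio $\overbar r=1$ that \eqref{eq:p_star_r} excludes. So the statement should be read as concerning the remaining, nontrivial positive root, and the plan is to divide out $(x-1)^2$ and study the quotient. Throughout, set $m \coloneqq b+1-a \geq 2$.

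\emph{Step 1: extract $(x-1)^2$.} Rewrite
\[
f(x) = (b-\mu)\,x^{m}(x-1) - (x^m-1) + (\mu-a+1)(x-1),
\]
which gives $f(x)=(x-1)\,g(x)$ with
\[
g(x)=(b-\mu)x^m-\sum_{k=0}^{m-1}x^k+(\mu-a+1).
\]
The constant terms balance because $(b-\mu)-m+(\mu-a+1)=0$. Hence $g(1)=0$, and
\[
g(x)=(b-\mu)(x^m-1)-\sum_{k=1}^{m-1}(x^k-1).
\]
Dividing once more by $(x-1)$ gives $g(x)=(x-1)h(x)$, where
\[
h(x)=\sum_{j=0}^{m-1}c_j x^j, \qquad c_j=(b-\mu)-(m-1-j).
\]

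\emph{Step 2: $h$ has exactly one positive root.} The coefficients $c_j$ are strictly increasing in $j$. At the ends, $c_0=a-\mu<0$ and $c_{m-1}=b-\mu>0$. The sequence of coefficients therefore has exactly one sign change, so Descartes' rule of signs shows that $h$ has exactly one positive root $x^*$. Equivalently, one can argue directly: $h(0)<0$, $h(x)\to+\infty$ as $x\to\infty$, and $x^{-j_0}h(x)$ is increasing, where $j_0$ is the index at which the sign changes. In either case, $h$ is negative on $(0,x^*)$ and positive on $(x^*,\infty)$.

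\emph{Step 3: locate $x^*$ relative to $1$.} Compute
\[
h(1)=m(b-\mu)-\frac{m(m-1)}{2}=m\Big(\frac{a+b}{2}-\mu\Big).
\]
If $\mu<\frac{a+b}{2}$, then $h(1)>0$, so $x^*\in(0,1)$. If $\mu=\frac{a+b}{2}$, then $x^*=1$, so $x=1$ is the only positive root of $f$ (of multiplicity three). If $\mu>\frac{a+b}{2}$, then $h(1)<0$, so $x^*\in(1,\infty)$.

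Since $f(x)=(x-1)^2h(x)$, this yields cases (i)–(iii): apart from the trivial root $x=1$, $f$ has exactly one positive root, and it lies where stated.

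The only delicate point is the bookkeeping in Steps 1 and 2, namely verifying the factorization and the exact formula for the $c_j$. Everything after that is sign counting.
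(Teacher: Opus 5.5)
Your argument is correct, and its first half matches the paper's. The paper also factors $f(x)=(x-1)^2g(x)$ with $g(x)=x^{\mu-a}\sum_{\ell=1}^{b-\mu}\ell x^{\ell}-\sum_{\ell=1}^{\mu-a}(1+x+\cdots+x^{\ell-1})$. This is exactly your $h(x)=\sum_{j=0}^{b-a}\bigl(j-(\mu-a)\bigr)x^j$, and the paper's value $g(1)=\sum_{\ell=1}^{b-\mu}\ell-\sum_{\ell=1}^{\mu-a}\ell$ equals your $m\bigl(\frac{a+b}{2}-\mu\bigr)$.

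The two routes split at the uniqueness step. The paper argues case by case with calculus. In (i) it gets a root in $(0,1)$ from $g(0)<0<g(1)$. It excludes roots in $(1,\infty)$ by showing $g'>0$ there. It excludes a second root in $(0,1)$ by proving $g'(x_*)>(\mu-a)\,g(x_*)=0$ at every root $x_*\in(0,1)$. Case (ii) is done by a separate explicit factorization with $(x-1)^3$, and (iii) is left as analogous.

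You instead use that the coefficients of $h$ increase by one from $a-\mu<0$ to $b-\mu>0$. Descartes' rule (or the monotonicity of $x^{-j_0}h(x)$) then gives exactly one simple positive root in all three cases at once, and the sign of $h(1)$ locates it.

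Your version is shorter, uniform across (i)–(iii), and gives the sign pattern of $h$ for free. In particular it genuinely covers (iii). The paper's derivative estimate as written uses $x_*<1$ (to get $(\mu-a+\ell)/x_*>\mu-a$), so it would need adjusting there. The paper's approach has only the modest advantage of not invoking Descartes' rule.

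Finally, no reinterpretation of the statement is needed. The intervals $(0,1)$ and $(1,\infty)$ are open, so the double root at $x=1$ is already excluded, and you prove exactly the literal claim.
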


\medskip
\begin{figure}[ht]
    \centering
    \includegraphics[width=.99\textwidth]{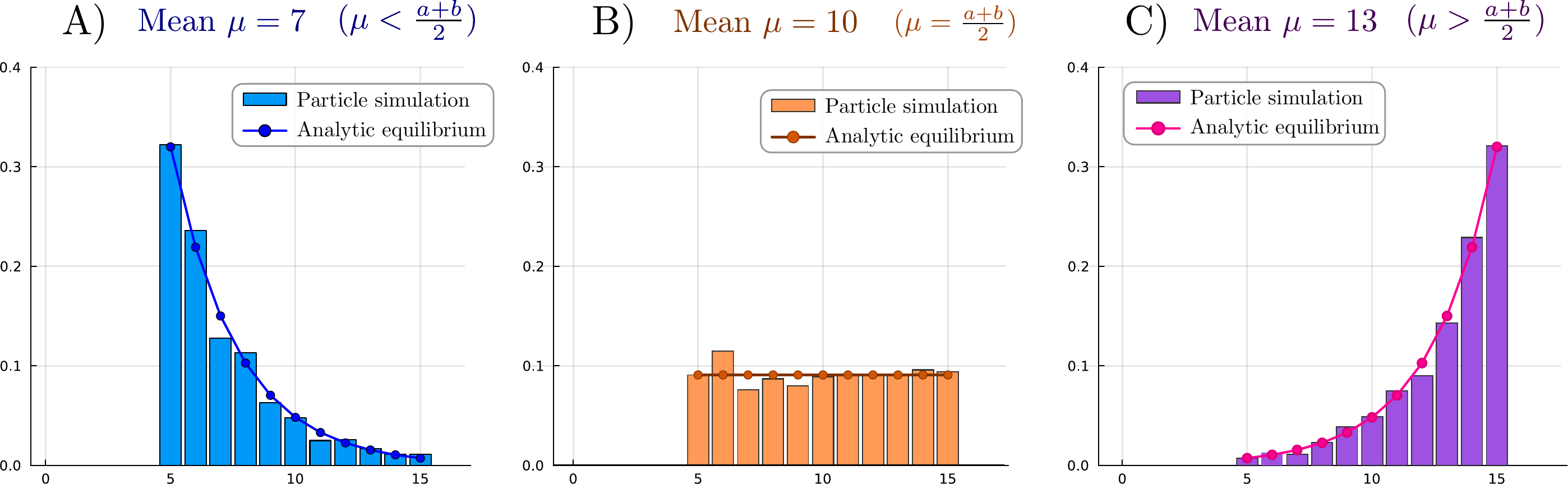}
    \caption{Geometric-type Equilibrium distribution ${\bf p}^*$ \refeq{eq:equi} for different values of the mean value $\mu$. \textbf{A)} Corresponds to the case where the root resides within $(0,1)$ and the resulting distribution is decreasing within $[a,b]$.
             \textbf{B)} Corresponds to the case where the root is exactly at $x = 1$ and the distribution is (nearly) uniform within $[a,b]$.
             \textbf{C)} Corresponds to the case where the root lies within $(1,\infty)$ and the distribution is increasing within $[a,b]$.}
    \label{fig:group-equilibrium}
\end{figure}

\medskip

\Proof We start with the proof of (ii) and hence assume that $b-\mu = \mu-a$. In this case, the equation $f(x) = 0$ implies
\begin{equation}\label{eq:casei}
(\mu-a)\,x^{2\,(\mu-a+1)} - (\mu-a+1)\,x^{2\,(\mu-a)+1} + (\mu-a+1)\,x + a - \mu = 0.
\end{equation}
Factorizing the left hand side of \eqref{eq:casei} gives rise to
\[(x-1)^3\,\sum\limits_{\ell=1}^{\mu-a}\left[\ell\,x^{\mu-a+\ell-1} + \ell\,x^{\mu-a+\ell-2} + \cdots + \ell\,x^\ell + \ell\,x^{\ell-1} + (\ell-1)\,x^{\ell-2} + \cdots + 1 \right] = 0,\]
from which it follows readily that the only positive root of $f(x)$ occurs at $x = 1$ (as the polynomial multiplying the factor $(x-1)^3$ is obviously positive for all $x \geq 0$). Now we turn to the proof of part (i) and assume that $b-\mu > \mu-a$ (the proof of part (iii) resembles the proof of part (i) and will be omitted). We decompose the polynomial $f$ as follows:
\[f(x) = (x-1)^2\,g(x) ~~\textrm{with}~~ g(x) = x^{\mu-a}\,\sum\limits_{\ell=1}^{b-\mu} \ell\,x^\ell - \sum\limits_{\ell=1}^{\mu-a}\left(1+x+\cdots+x^{\ell-1}\right).\] Since $0\leq a < \mu < b$ and $b-\mu > \mu-a$, we have $g(0) = a - \mu < 0$ and $g(1) = \sum\limits_{\ell=1}^{b-\mu} \ell - \sum\limits_{\ell=1}^{\mu-a} \ell > 0$, the mean value theorem guarantees the existence of some $\overbar{x} \in (0,1)$ for which $g(\overbar{x}) = 0$. Moreover, the derivative of $g$ is easily computable and \[g'(x) = \sum\limits_{\ell=1}^{b-\mu} (\mu-a+\ell)\,\ell\,x^{\mu-a+\ell-1} - \sum\limits_{\ell=1}^{\mu-a} \left[(\ell-1)\,x^{\ell-2} + \cdots + 2\,x + 1\right]. \] We observe that $g'(x) > 0$ for all $x > 1$ since
\[g'(x) > \sum\limits_{\ell=1}^{b-\mu} \ell\,(\underbrace{x^\ell + x^{\ell} + \cdots x^{\ell}}_{\ell\rm\ times}) - \sum\limits_{\ell=1}^{\mu-a} \ell\,\left(x^{\ell-1} + \cdots +x+1\right) > 0 \] for all $x\geq 1$. Consequently, the polynomial $g(x)$ has no roots within $(1,\infty)$. In order to prove that $g$ has no other roots within $(0,1)$ (other than $\overbar{x}$), it suffices to show that
\begin{equation}\label{eq:sufficient}
g'(x_*) > 0
\end{equation}
whenever $x_* \in (0,1)$ is a root of $g$. Indeed, suppose to the contrary (and without loss of generality) that $x_1 < x_2$ are two distinct roots of $g$ within $(0,1)$ such that there are no other roots of $g$ within $(x_1,x_2)$. As $g'(x_1) > 0$, we deduce that $g(x) > 0$ for all $x \in (x_1,x_2)$. On the other hand, by continuity and the fact that $g'(x_2) > 0$, we also obtain $g'(\xi) > 0$ for any $\xi$ sufficiently close to $x_2$, which contradicts the equation $g(x_2) = 0$. Finally, it remains to prove \eqref{eq:sufficient}. Since $x_* \in (0,1)$ is a root of $g$, we can perform the following estimates:
\begin{align*}
g'(x_*) &= \sum\limits_{\ell=1}^{b-\mu} \frac{(\mu-a+\ell)}{x_*}\,\ell\,x^{\mu-a+\ell}_* - \sum\limits_{\ell=1}^{\mu-a} \left[(\ell-1)\,x^{\ell-2}_* + \cdots + 2\,x_* + 1\right] \\
& > \sum\limits_{\ell=1}^{b-\mu} \frac{(\mu-a+\ell)}{x_*}\,\ell\,x^{\mu-a+\ell}_* - (\mu-a)\,\sum\limits_{\ell=1}^{\mu-a} \left[x^{\ell-1}_* + \cdots + x_* + 1\right] \\
& > (\mu-a)\,g(x_*) = 0,
\end{align*}
from which the advertised inequality follows. \qed

As an immediate corollary thanks to Proposition \ref{prop:polyroot}, we deduce that the common ratio $\overbar{r}\in \mathbb{R}_+$ in the definition of the truncated geometric-type equilibrium distribution ${\bf p}^*$ \eqref{eq:equi} satisfies
\begin{itemize}
\item if $\mu<\frac{a+b}{2}$, then $\overbar{r} < 1$ and ${\bf p}^*$  is a decaying function on $[a,b]$ (see Figure \ref{fig:group-equilibrium}-A)
\item if $\mu=\frac{a+b}{2}$, then $\overbar{r} = 1$ and ${\bf p}^*$  is flat on $[a,b]$ (see Figure \ref{fig:group-equilibrium}-B)
\item if $\mu>\frac{a+b}{2}$, then $\overbar{r} > 1$ and ${\bf p}^*$  is increasing on $[a,b]$ (see Figure \ref{fig:group-equilibrium}-C).
\end{itemize}

\begin{remark}
In general, a closed-form expression for the common ratio $\overbar{r}$ in terms of $a$, $b$, and $\mu$ does not appear to be attainable at least when $\mu \neq (a+b)/2$. However, in the special case where $b \to \infty$ and $a\in \mathbb N$, we can determine $\overbar{r}$ explicitly and in this case the geometric-type equilibrium distribution ${\bf p}^*$ boils down to
\begin{equation}\label{eq:equi_b=infty}
p^*_n = 0 ~~\text{for}~~ n\leq a-1,~~  p^*_a = \frac{1}{\mu-a+1},~~p^*_n = \left(\frac{\mu-a}{\mu-a+1}\right)^{n-a}\,p^*_a ~~\text{for}~~ n\geq a.
\end{equation}
\end{remark}

\section{Large time behavior}\label{sec:3}
\setcounter{equation}{0}

\subsection{Convergence to equilibrium}\label{subsec:3.1}
In this subsection, we turn our attention to the problem of showing convergence of solutions of the nonlinear ODE system \eqref{eq:law_limit}-\eqref{eq:two-classes} to its unique equilibrium ${\bf p}^*$ \eqref{eq:equi}. The key ingredient on which we will rely is the construction of an appropriate Lyapunov functional associated with the evolution equation \eqref{eq:law_limit}-\eqref{eq:two-classes}. We will restrict ourselves to the set of admissible initial datum ${\bf p}(0)$ satisfying
\begin{equation}\label{eq:IC_condition}
{\bf p}(0) \in \mathcal{A} \coloneqq \left\{{\bf p} \in V_\mu \mid \sum\limits_{n\geq 0} 2^n\,p_n < \infty \right\}.
\end{equation}
We remark here that the constant $2$ appearing in the assumption \eqref{eq:IC_condition} can be replaced by any constant $K$ as long as $K > 1$, but for ease of presentation, we stick to the choice of $K=2$. Also, the restriction \eqref{eq:IC_condition} on the initial data is mild from a practical point of view, since one has to perform a truncation step in order to implement numerical experiments related to the mean-field ODE system \eqref{eq:law_limit}-\eqref{eq:two-classes} of infinite dimension.

We recall from earlier work \cite{cao_derivation_2021,cao_interacting_2022,dragulescu_statistical_2000} on the classical BDY model, which corresponds to taking $a=0$ and $b \to \infty$ in this manuscript, that the Boltzmann entropy (also known as the Boltzmann's $\HH$ functional)
\begin{equation}\label{eq:H_functional}
\HH[{\bf p}] \coloneqq \sum\limits_{n\geq 0} p_n\,\log p_n
\end{equation}
is dissipating along the solution of the mean-field system
\begin{equation}\label{eq:standard_BDY}
  p'_n = \left\{
    \begin{array}{ll}
      p_1-(1-p_0)\,p_0 & \quad \text{for~~} n=0, \\
      p_{n+1}+(1-p_0)\,p_{n-1}- (1+(1-p_0))\,p_n & \quad \text{for~~} n \geq 1.
    \end{array}
  \right.
\end{equation}
Moreover, the geometric distribution with mean $\mu$ is the unique (global) minimizer of the Boltzmann functional $\HH[{\bf p}]$ among ${\bf p} \in V_\mu$.

We propose to adapt $\HH$ to our dynamics and employ the following candidate Lyapunov functional:
\begin{equation}\label{eq:naive_Lyapunov}
\mathcal{H}_{ab}[{\bf p}] \coloneqq \sum\limits_{n=0}^{a-1} p_n + \sum\limits_{n=a}^{b} p_n\,\log p_n + \sum\limits_{n=b+1}^{\infty} p_n.
\end{equation}
Indeed, if we take into account the fact that $p^*_n = 0$ for all $n \notin \{a,a+1,\ldots,b\}$ and that $\{p^*_n\}_{a\leq n\leq b}$ resembles a geometric distribution (restricted to a possibly bounded domain), it is natural to consider the local/truncated entropy of the form $\sum_{n=a}^{b} p_n\,\log p_n$. Meanwhile, since the time derivative of the local entropy $\sum_{n=a}^{b} p_n\,\log p_n$ will produce the term $\sum_{n=a}^{b} p'_n$ which does not have a definite sign, we compensate for the contribution of the aforementioned term by including the sum $\sum_{n \notin \{a,a+1,\ldots,b\}} p_n$ in the design of $\mathcal{H}_{ab}[{\bf p}]$.

A routine application of the method of Lagrange multipliers shows that the equilibrium distribution ${\bf p}^*$ \eqref{eq:equi} enjoys the following variational characterization:
\[{\bf p}^* = \argmin\limits_{{\bf p} \in V_\mu} \mathcal{H}_{ab}[{\bf p}].
\]
Let us investigate how the proposed Lyapunov functional $\mathcal{H}_{ab}[{\bf p}]$ behaves along the solution ${\bf p}(t)$ of the nonlinear ODE system \eqref{eq:law_limit}-\eqref{eq:two-classes}.
\begin{proposition}\label{ppo:decay_H_ab}
Assume that ${\bf p}(t)=\{p_n(t)\}_{n\geq 0}$ is a classical solution of \eqref{eq:law_limit}-\eqref{eq:two-classes} such that ${\bf p}(0) \in \mathcal{P}(\mathbb N)$ with mean $\mu \in \mathbb N_+$, then
\begin{equation}
\label{eq:decay_H_ab}
\frac{\dd}{\dd t} \mathcal{H}_{ab}[{\bf p}]  \leq - \lambda_r \log \lambda_r \sum_{n=b+1}^{\infty} p_n - \lambda_g \log \lambda_g \sum_{n=0}^{a-1} p_n,
\end{equation}
with $\lambda_r$ and $\lambda_g$ defined as in \eqref{eq:two-classes}.
\end{proposition}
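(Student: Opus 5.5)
The plan is a direct computation: differentiate $\mathcal{H}_{ab}[{\bf p}(t)]$ along the flow, rewrite the dynamics as nearest-neighbour fluxes, and sort the result into a dissipative part, boundary terms and a leftover. No estimate beyond $\lambda_r,\lambda_g\in[0,1]$ and $p_n\le 1$ should be needed.

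\textbf{Step 1: reduce to the window $a\le n\le b$.} Differentiating gives
\[
\frac{\dd}{\dd t}\mathcal{H}_{ab}[{\bf p}]=\sum_{n\notin[a,b]}p_n'+\sum_{n=a}^b p_n'(\log p_n+1).
\]
By Lemma \ref{lem:invariant} we have $\sum_n p_n'=0$, so
\[
\frac{\dd}{\dd t}\mathcal{H}_{ab}[{\bf p}]=\sum_{n=a}^b p_n'\log p_n .
\]
Here $\sum_{n>b}p_n'$ is the telescoping tail $-\lambda_r p_{b+1}$, so this step is legitimate for classical solutions. Where some $p_n$ vanishes, I use the convention $0\log 0=0$, or argue for positive data and pass to the limit.

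\textbf{Step 2: flux form of the dynamics.} On the edge $n\to n+1$ the upward rate is $\lambda_g p_n\mathbbm{1}_{\{n\le b-1\}}$ and the downward rate is $\lambda_r p_{n+1}\mathbbm{1}_{\{n\ge a\}}$. Hence:
\begin{itemize}
\item on interior edges $a\le n\le b-1$ the net flux is $J_n=\lambda_g p_n-\lambda_r p_{n+1}$;
\item on the edge $a-1\to a$ there is only the inflow $\lambda_g p_{a-1}$ (absent if $a=0$);
\item on the edge $b\to b+1$ there is only the backflow $\lambda_r p_{b+1}$ into state $b$.
\end{itemize}
Summation by parts gives
\[
\sum_{n=a}^b p_n'\log p_n=\sum_{n=a}^{b-1}J_n(\log p_{n+1}-\log p_n)+\lambda_g p_{a-1}\log p_a+\lambda_r p_{b+1}\log p_b .
\]
The two boundary terms are $\le 0$ because $p_a,p_b\le 1$, so I discard them.

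\textbf{Step 3: interior terms (the main step).} Write $x=\lambda_g p_n$ and $y=\lambda_r p_{n+1}$. Then
\[
\log p_{n+1}-\log p_n=-(\log x-\log y)+\log\lambda_g-\log\lambda_r ,
\]
so each interior term equals $-(x-y)(\log x-\log y)+(x-y)(\log\lambda_g-\log\lambda_r)$. The first piece is $\le 0$ because $\log$ is monotone.

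The remaining task is to compute $S=\sum_{n=a}^{b-1}J_n$ exactly. Using \eqref{eq:two-classes}, $\sum_{n=a}^{b-1}p_n=\lambda_r-P_{<}$ and $\sum_{n=a+1}^{b}p_n=\lambda_g-P_{>}$, where $P_<=\sum_{n<a}p_n$ and $P_>=\sum_{n>b}p_n$. Therefore
\[
S=\lambda_g(\lambda_r-P_<)-\lambda_r(\lambda_g-P_>)=\lambda_r P_>-\lambda_g P_< .
\]

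\textbf{Step 4: conclude.} Combining Steps 1–3,
\[
\frac{\dd}{\dd t}\mathcal{H}_{ab}\le(\lambda_rP_>-\lambda_gP_<)(\log\lambda_g-\log\lambda_r).
\]
Expanding gives four terms. Two of them, $\lambda_rP_>\log\lambda_g$ and $\lambda_gP_<\log\lambda_r$, are $\le 0$ since $\lambda_r,\lambda_g\in(0,1]$; both are positive because $a<\mu<b$. Dropping them leaves exactly $-\lambda_r\log\lambda_r\,P_>-\lambda_g\log\lambda_g\,P_<$, which is \eqref{eq:decay_H_ab}.

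The only real obstacle is the bookkeeping at the boundary edges $a-1\to a$ and $b\to b+1$. The identity $S=\lambda_rP_>-\lambda_gP_<$ must come out exactly, since it is what turns the sign-indefinite remainder into the two tail terms of \eqref{eq:decay_H_ab}.
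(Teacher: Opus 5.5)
Your proposal is correct and follows the paper's proof essentially step for step. You use mass conservation to reduce to $\sum_{n=a}^b p_n'\log p_n$, discard the nonpositive boundary terms $\lambda_g p_{a-1}\log p_a+\lambda_r p_{b+1}\log p_b$, and split $\log(p_n/p_{n+1})$ through $\lambda_r/\lambda_g$ so that monotonicity of $\log$ applies. The total interior flux is $\lambda_r P_>-\lambda_g P_<$, and dropping the two cross terms gives the bound. The only difference is that you state the flux-sum identity explicitly, whereas the paper leaves it implicit.
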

\begin{proof}
A straightforward computation yields that
\begin{eqnarray*}
\frac{\dd}{\dd t} \mathcal{H}_{ab}[{\bf p}]
&=& \sum_{n=0}^{a-1}p'_n + \sum_{n=a}^{b} (p'_n\,\log p_n + p'_n) + \sum_{n=b+1}^{\infty}p'_n\\
&=& \sum_{n=a}^{b} p'_n\,\log p_n,
\end{eqnarray*}
where we have exploited the fact that $\sum_{n\geq 0} p_n'=0$ due to the preservation of the total probability mass. Therefore,
\begin{eqnarray*}
\frac{\dd}{\dd t} \mathcal{H}_{ab}[{\bf p}] &=&  \underbrace{\lambda_g\,p_{a-1}\,\log p_a + \lambda_r\,p_{b+1}\,\log p_b}_{\leq 0}\\
&\quad& +\displaystyle \sum_{n=a}^{b-1} \left[\lambda_rp_{n+1}-\lambda_gp_n\right]\log\frac{p_n}{p_{n+1}}.
\end{eqnarray*}
Since $p_{a-1}\,\log p_a\leq 0$ and $p_{b+1}\,\log p_b\leq 0$, we arrive at
\begin{eqnarray*}
\frac{\dd}{\dd t} \mathcal{H}_{ab}[{\bf p}] &\leq& -\sum_{n=a}^{b-1} \underbrace{\left[\lambda_r\,p_{n+1}-\lambda_g\,p_n\right]\log\frac{\lambda_r\,p_{n+1}}{\lambda_g\,p_n}}_{\geq 0} + \sum_{n=a}^{b-1} \left[\lambda_r\,p_{n+1}-\lambda_g\,p_n\right]\log\frac{\lambda_r}{\lambda_g}.
\end{eqnarray*}
By virtue of the elementary inequality $(y-x)\,(\log y -\log x) \geq 0$ for all $x,y\geq 0$, we conclude that
\begin{eqnarray*}
\frac{\dd}{\dd t} \mathcal{H}_{ab}[{\bf p}] &\leq& \underbrace{\sum_{n=0}^{a-1} \lambda_gp_n \log \lambda_r +\sum_{n=b+1}^{\infty} \lambda_rp_n \log \lambda_g}_{\leq 0} \\
&\qquad& - \sum_{n=0}^{a-1} \lambda_gp_n \log \lambda_g - \sum_{n=b+1}^{\infty} \lambda_rp_n \log \lambda_r,
\end{eqnarray*}
where we have used that $\log \lambda_r<0$ and $\log \lambda_g<0$. Thus the proof is completed.
\end{proof}
Unfortunately, for a general initial datum ${\bf p} \in \mathcal{A}$, the candidate Lyapunov functional $\mathcal{H}_{ab}[{\bf p}]$ does not necessarily decrease along solutions of the mean-field ODE system \eqref{eq:law_limit}-\eqref{eq:two-classes}. Nevertheless, one might conjecture that its time derivative becomes non-positive after some finite time.

In order to overcome the issue with the naive choice $\mathcal{H}_{ab}[{\bf p}]$, we decide to modify the functional $\mathcal{H}_{ab}$ by adding additional dissipating terms:
\begin{equation}\label{eq:energy_functional}
\widetilde{\mathcal{H}}_{ab}[{\bf p}] \coloneqq  \mathcal{H}_{ab}[{\bf p}] + k_1\sum\limits_{n=b+1}^{\infty} n\,p_n + k_2\sum\limits_{n=0}^{a-1} (a-n)\,p_n,
\end{equation}
The main idea is that with sufficiently large positive weights $k_1$ and $k_2$, the terms $\sum_{n\geq b+1} n\,p_n$ and $\sum_{n\leq a-1} (a-n)\,p_n$ will compensate for the possibly positive terms appearing in the time derivative of our modified candidate Lyapunov functional \eqref{eq:energy_functional}. We also emphasize that our generalized Boltzmann's $\widetilde{\mathcal{H}}_{ab}$ functional \eqref{eq:energy_functional} boils down to the classical $\HH$ functional \ref{eq:H_functional} in the special case when $a = 0$ and $b \to \infty$. As for our previous functional $\mathcal{H}_{ab}[{\bf p}]$, ne can verify via the method of Lagrange multipliers that the truncated geometric-type distribution ${\bf p}^*$ remains the unique global minimizer of $\widetilde{\mathcal{H}}_{ab}[{\bf p}]$:
\begin{equation*}
{\bf p}^* = \argmin\limits_{{\bf p} \in V_\mu} \widetilde{\mathcal{H}}_{ab}[{\bf p}]
\end{equation*}
for each fixed choice of the pair $(k_1,k_2) \in \mathbb{R}^2_+$. This can be regarded as a natural generalization of the minimum entropy characterization of the geometric distribution among ${\bf p} \in V_\mu$ (here “minimum” is used instead of “maximum” since the Shannon entropy $-\HH[{\bf p}]$ differs from the Boltzmann entropy $\HH[{\bf p}]$ by a minus sign).


We are now ready to state the main result of the manuscript.

\begin{theorem}[Dissipation of the generalized entropy]\label{thm:dissipation_of_E}
Let ${\bf p}(t) = \{p_n(t)\}_{n\geq 0}$ be a classical solution to the system of nonlinear ODEs \eqref{eq:law_limit}-\eqref{eq:two-classes} with ${\bf p}(0) \in \mathcal{A}$. There exist fixed $k_1>0$ and $k_2>0$ such that for all $t \geq 0$ we have
\begin{equation}\label{eq:dissipation_E}
\frac{\dd}{\dd t} \widetilde{\mathcal{H}}_{ab}[{\bf p}] \leq 0.
\end{equation}
Moreover, $\frac{\dd}{\dd t} \widetilde{\mathcal{H}}_{ab}[{\bf p}] = 0$ if and only if ${\bf p}$ coincides with ${\bf p}^*$ \eqref{eq:equi}. Consequently, ${\bf p}(t) \xrightarrow{t\to \infty} {\bf p}^*$ strongly in $\ell^p$ for each $1 < p < \infty$.
\end{theorem}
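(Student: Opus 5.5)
The plan is to differentiate the two added terms in \eqref{eq:energy_functional} exactly, combine them with Proposition~\ref{ppo:decay_H_ab}, and then pick $k_1,k_2$ from time-uniform lower bounds on $\lambda_r$ and $\lambda_g$.

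\textbf{Step 1: the added terms.} For $n\geq b+1$ we have $n\geq a+1$ and $n> b$, so \eqref{eq:L} reduces to $p_n'=\lambda_r(p_{n+1}-p_n)$. Summation by parts then gives
\[\frac{\dd}{\dd t}\sum_{n\geq b+1} n\,p_n=-\lambda_r\Big[(b+1)p_{b+1}+\sum_{n\geq b+2}p_n\Big]\leq -\lambda_r\sum_{n\geq b+1}p_n .\]
For $0\leq n\leq a-1$ we have $p_n'=\lambda_g(\mathbbm{1}_{\{n\geq1\}}p_{n-1}-p_n)$, and the same telescoping yields
\[\frac{\dd}{\dd t}\sum_{n=0}^{a-1}(a-n)\,p_n=-\lambda_g\sum_{n=0}^{a-1}p_n .\]
These manipulations are justified by the moment bound of Step 2. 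Combined with \eqref{eq:decay_H_ab}, we get
\[\frac{\dd}{\dd t}\widetilde{\mathcal H}_{ab}[{\bf p}]\leq -\lambda_r(\log\lambda_r+k_1)\sum_{n\geq b+1}p_n-\lambda_g(\log\lambda_g+k_2)\sum_{n=0}^{a-1}p_n .\]
It therefore suffices to find constants with $\lambda_r\geq e^{-k_1}$ and $\lambda_g\geq e^{-k_2}$ for all $t\geq 0$.

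\textbf{Step 2: uniform lower bounds on the rates.} For $\lambda_r$, Markov's inequality and mean conservation (Lemma~\ref{lem:invariant}) give $1-\lambda_r=\sum_{n\geq b}p_n\leq \mu/b$. Hence $\lambda_r\geq 1-\mu/b>0$, and we take $k_1>-\log(1-\mu/b)$.

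For $\lambda_g$ we need control of the upper tail, and this is where the assumption ${\bf p}(0)\in\mathcal A$ enters. Receiving is impossible from states $n\geq b$, so no mass ever enters $\{n>b\}$ from below. Consequently, for $n>b$ the dynamics only shifts mass downward, and $M(t)=\sum_n 2^n p_n$ satisfies $M(t)\leq \max(2^b,M(0))\eqqcolon M_0$. This can be checked by differentiating $\sum_n 2^np_n$ and noting that the only positive contribution comes from jumps into states $\leq b$. In particular $\sum_n n^2p_n\leq C M_0$ uniformly in time.

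Then, by Cauchy--Schwarz,
\[\mu-a\leq\sum_{n>a}(n-a)p_n\leq \Big(\sum_n n^2p_n\Big)^{1/2}\lambda_g^{1/2},\]
so $\lambda_g\geq (\mu-a)^2/(CM_0)$. We choose $k_2$ larger than minus the logarithm of this bound, which proves \eqref{eq:dissipation_E}.

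\textbf{Step 3: equality case.} With these strict choices of $k_1,k_2$, vanishing of the derivative forces $p_n=0$ for all $n\notin[a,b]$. Every inequality used in Proposition~\ref{ppo:decay_H_ab} must then be an equality, in particular $(\lambda_rp_{n+1}-\lambda_gp_n)\log\frac{\lambda_rp_{n+1}}{\lambda_gp_n}=0$ for $a\leq n<b$. This gives $p_{n+1}=(\lambda_g/\lambda_r)p_n$ on $[a,b]$, so ${\bf p}$ has the form \eqref{eq:equi}. Since ${\bf p}\in V_\mu$, its ratio must be the root identified in Proposition~\ref{prop:polyroot}, hence ${\bf p}={\bf p}^*$. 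Conversely, ${\bf p}^*$ is stationary, so the derivative vanishes there.

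\textbf{Step 4: convergence.} This is a LaSalle-type argument. $\widetilde{\mathcal H}_{ab}$ is bounded below along the trajectory and nonincreasing, so the dissipation $D(t)=-\frac{\dd}{\dd t}\widetilde{\mathcal H}_{ab}$ is integrable on $(0,\infty)$. The uniform bound $M(t)\leq M_0$ gives tightness, so the orbit is relatively compact in $\ell^1$. Along any sequence $t_j\to\infty$, the shifted trajectories converge on $[0,1]$ to a solution with zero dissipation, which by Step 3 equals ${\bf p}^*$. Since the limit is unique, the whole trajectory converges in $\ell^1$, and hence in every $\ell^p$ with $1<p<\infty$ because $\|\cdot\|_{\ell^p}\leq\|\cdot\|_{\ell^1}$.

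I expect the main obstacle to be Step 2 for $\lambda_g$, namely rigorously propagating the exponential moment. A secondary difficulty is the continuity of the dissipation functional needed in the LaSalle step, because of the logarithms near $p_n=0$; I would handle this with lower semicontinuity of the entropy-dissipation terms.
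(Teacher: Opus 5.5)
Your proposal is correct and follows essentially the paper's proof. It uses the same exact computation of the two added terms, Markov's inequality for $\lambda_r$, a time-uniform bound on $\sum_n 2^n p_n$ combined with a second-moment (Cauchy--Schwarz) bound for $\lambda_g$, and $k_1,k_2$ chosen so that $\log\lambda_r+k_1\geq 0$ and $\log\lambda_g+k_2\geq 0$.

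Your bound $\max(2^b,M(0))$ is in fact sharper than the paper's Lemma~\ref{lem:2}. Write out its justification, namely
\[\frac{\dd}{\dd t}\sum_n 2^n p_n\leq \frac{\lambda_r}{2}\Bigl(2^b-\sum_n 2^n p_n\Bigr),\]
instead of relying on the ``only positive contribution'' heuristic. Your strict choice of $k_1,k_2$ and your LaSalle sketch make the equality case and the convergence more explicit than the paper does, since it only asserts them.
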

Explicit values for $k_1$ and $k_2$ can be determined and will be used in the proof of Theorem~\ref{thm:dissipation_of_E}. Specifically, we set
\begin{equation}\label{eq:k1k2}
k_1=-\ln{\left(1-\mu / b \right)} \quad \text{ and }\quad k_2= \ln\left( \max \left\{\frac{2^{b+1}+2^a}{1-\mu / b}, \sum_{n \geq 0} 2^np_n(0) \right\}\right).
\end{equation}

The rest of this subsection is devoted to the proof of Theorem \ref{thm:dissipation_of_E}, and to facilitate the presentation we first develop several preliminary results. We will need uniform (in time) lower bounds on $\lambda_r$ and $\lambda_g$, respectively. The uniform lower bound on $\lambda_r$ is rather elementary to establish, and the uniform lower bound on $\lambda_g$ requires additional effort along with a well-prepared initial datum\eqref{eq:IC_condition}.

\begin{lemma}\label{lem:1}
For each ${\bf p} \in \mathcal{A}$, let $C_0 = \sum_{n\geq 0} 2^n\,p_n < \infty$. Then
\begin{equation}\label{eq:uniform_lower_bounds}
\lambda_r \geq 1 - \frac{\mu}{b} \quad \textrm{and} \quad \lambda_g \geq 1/C_0.
\end{equation}
\end{lemma}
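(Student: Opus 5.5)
The two bounds are of different natures. For $\lambda_r$ I would use Markov's inequality together with the fixed mean $\mu$. For $\lambda_g$ I would use convexity (Jensen) applied to the exponential moment $C_0$, combined with the mean constraint.

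\textbf{Lower bound on $\lambda_r$.} Since $\mathbf p\in V_\mu$, we have $\lambda_r = 1-\sum_{\ell\ge b}p_\ell$. Markov's inequality gives
\[
\sum_{\ell\ge b}p_\ell \le \frac1b\sum_{\ell\ge b}\ell\,p_\ell \le \frac{\mu}{b},
\]
so $\lambda_r\ge 1-\mu/b$. This is strictly positive because $\mu<b$.

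\textbf{Lower bound on $\lambda_g$, first step: the mean constraint.} Write $\lambda:=\lambda_g=\sum_{n\ge a+1}p_n$. Then $\lambda>0$, since otherwise the mean would be at most $a<\mu$. Splitting the mean at $a$ gives
\[
\mu \le a(1-\lambda)+\sum_{n\ge a+1} n\,p_n .
\]
Introduce the conditional law $q_n=p_n/\lambda$ on $\{n\ge a+1\}$ with mean $m=\sum_{n\ge a+1} n\,q_n$. Using $\mu\ge a+1$ (both are integers), the display yields
\[
\lambda m \ge \mu - a + a\lambda \ge 1 + a\lambda,
\qquad\text{hence}\qquad m \ge a + \tfrac1\lambda \ge \tfrac1\lambda ,
\]
where the last step uses $a\ge 0$.

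\textbf{Second step: Jensen.} I bound $C_0$ using all of its mass, not just the part above $a$. The terms with $n\le a$ contribute at least $2^0(1-\lambda)=1-\lambda$. For the terms with $n\ge a+1$, Jensen's inequality for the convex function $x\mapsto 2^x$ gives $\sum_{n\ge a+1}2^n q_n\ge 2^m$. Therefore
\[
C_0 \ge (1-\lambda) + \lambda\, 2^{m} \ge (1-\lambda)+\lambda\,2^{1/\lambda}.
\]

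\textbf{Third step: reduction to an elementary inequality.} It suffices to show $(1-\lambda)+\lambda 2^{1/\lambda}\ge 1/\lambda$ for $\lambda\in(0,1]$. Setting $x=1/\lambda\ge 1$ and multiplying by $x$, this is equivalent to
\[
\varphi(x):=2^x + x - 1 - x^2 \ge 0 \qquad\text{for } x\ge 1 .
\]

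\textbf{Main obstacle.} This last inequality is the crux. Discarding the mass of $\mathbf p$ below $a$ would leave only $2^x\ge x^2$, which fails for $x\in(2,4)$. So the contribution $1-\lambda$ from the lower part of the distribution is essential.

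To verify $\varphi\ge 0$, first note $\varphi(1)=\varphi(2)=\varphi(3)=1$. For $x\ge 4$, $2^x\ge x^2$ already holds (it holds at $x=4$, and the ratio $2^x/x^2$ is increasing for $x\ge 2/\ln 2$), and $x-1>0$. On $[1,4]$, I would check the sign of $\varphi$ through its critical points; numerically the minimum there is about $0.9>0$. Rigorously, one can split $[1,4]$ into a few subintervals and on each use monotonicity bounds: $2^x\ge 2^{\text{left endpoint}}$, while $x-1-x^2$ is decreasing for $x\ge 1$, so it is at least its value at the right endpoint. This completes $\lambda_g\ge 1/C_0$.
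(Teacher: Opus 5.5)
Your proof is correct. The $\lambda_r$ bound is exactly the paper's Markov argument, but for $\lambda_g$ you take a different route.

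The paper uses Cauchy--Schwarz, $\mu\le\sqrt{\mathbb{E}[X^2]}\sqrt{\mathbb{P}(X\ge\mu)}+\mu-1$, to get $\lambda_g\ge\mathbb{P}(X\ge\mu)\ge 1/\mathbb{E}[X^2]$. It then asserts $\mathbb{E}[X^2]\le C_0$ via $n^2\le 2^n$. That comparison fails at $n=3$: for ${\bf p}=\delta_3$ with $\mu=3$ one has $\mathbb{E}[X^2]=9>8=C_0$. So the paper's chain as written only gives $\lambda_g\ge 1/(C_0+p_3)\ge 1/(C_0+1)$.

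Your Jensen argument avoids this by retaining the contribution $1-\lambda$ of the mass at or below $a$. Your remark that discarding it would leave the false inequality $2^x\ge x^2$ on $(2,4)$ identifies the same defect. So your route actually yields the stated constant $1/C_0$. The price is the real-variable inequality $\varphi(x)=2^x+x-1-x^2\ge 0$ on $[1,\infty)$; the paper's route needs only a second moment but loses a constant.

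Your subinterval check of $\varphi$ works, but it needs widths of roughly $0.1$ near $x\approx 2.6$ and $x\approx 4$, so it takes more than ``a few'' intervals. A shorter closure runs as follows. Since $\varphi''(x)=(\ln 2)^2\,2^x-2$ is increasing with $\varphi''(2)<0<\varphi''(3)$, $\varphi$ is concave on $[1,x_c]$ and convex on $[x_c,\infty)$ for some $x_c\in(2,3)$. On the convex part, the tangent at $3$ gives $\varphi(x)\ge 1+(8\ln 2-5)(x-3)\ge 0.4$, because $0<8\ln 2-5<0.6$ and $x>2$. On the concave part, $\varphi\ge\min\{\varphi(1),\varphi(x_c)\}\ge\min\{1,0.4\}>0$.
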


\Proof The lower bound on $\lambda_r$ follows essentially from Markov's inequality. Indeed, since $\mu = \sum_{n\geq 0} n\,p_n \geq \sum_{n\geq b} n\,p_n \geq b\,\sum_{n\geq b} p_n = b\,(1-\lambda_r)$, we deduce that $\lambda_r \geq 1 - \frac{\mu}{b}$. To obtain a lower bound on $\lambda_g$ we resort to the second moment method. Let $X \sim {\bf p}$ be a $\mathbb N$-valued random variable distributed according to ${\bf p}$, we have
\begin{equation*}
\mu = \mathbb{E}[X] = \mathbb{E}[X\,\mathbbm{1}\{X\geq \mu\}] + \mathbb{E}[X\,\mathbbm{1}\{X < \mu\}] \leq \sqrt{\mathbb{E}[X^2]}\,\sqrt{\mathbb{P}(X\geq \mu)} + \mu - 1,
\end{equation*}
where the inequality follows the observation that $X \in \mathbb N$ and $\mu \in \mathbb N_+$ imply $X\,\mathbbm{1}\{X < \mu\} \leq \mu-1$ almost surely. As $\mathbb{E}[X^2] = \sum_{n\geq 0} n^2\,p_n \leq \sum_{n\geq 0} 2^n\,p_n = C_0$ and $\mathbb{P}(X\geq \mu) \leq \lambda_g$, we deduce from the previous inequality that $\lambda_g\geq 1/C_0$, which completes the proof. \qed

From the content of Lemma \ref{lem:1}, a uniform lower bound on $\lambda_g$ can be hoped for once we establish a uniform (upper) bound for $\sum_{n\geq 0} 2^n\,p_n$, to which we now turn.

\begin{lemma}\label{lem:2}
Let ${\bf p}(t) = \{p_n(t)\}_{n\geq 0}$ be a classical solution to the system of nonlinear ODEs \eqref{eq:law_limit}-\eqref{eq:two-classes} with ${\bf p}(0) \in V_\mu$, we have
\begin{equation}\label{eq:uniform_exponential_moment}
\sup\limits_{t\geq 0} \sum\limits_{n\geq 0} K^n\,p_n(t) \leq \max\left\{\frac{K^{b+1}+K^a}{1-\mu/b},\sum_{n\geq 0} K^n\,p_n(0)\right\}
\end{equation}
for all $K > 1$, provided that $\sum_{n\geq 0} K^n\,p_n(0) < \infty$.
\end{lemma}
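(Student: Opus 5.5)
The plan is to derive a closed differential inequality for the exponential moment $M(t) \coloneqq \sum_{n\geq 0} K^n\,p_n(t)$ and then run a barrier (invariant region) argument.

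\textbf{Step 1: the evolution identity.} Multiply the $n$-th component of $\mathcal{L}[{\bf p}]$ in \eqref{eq:L} by $K^n$ and sum over $n$, shifting indices as in the proof of Lemma \ref{lem:invariant}. The receiving part gives
\[
\lambda_r\Big(\sum_{n\geq a} K^n p_{n+1}-\sum_{n\geq a+1}K^n p_n\Big) = -\lambda_r\,(1-K^{-1})\sum_{m\geq a+1} K^m p_m .
\]
The giving part gives
\[
\lambda_g\Big(\sum_{1\leq n\leq b} K^n p_{n-1}-\sum_{0\le n\leq b-1}K^n p_n\Big) = \lambda_g\,(K-1)\sum_{m=0}^{b-1} K^m p_m .
\]
Hence, formally,
\[
M'(t) = -(1-K^{-1})\,\lambda_r\sum_{m\geq a+1}K^m p_m + (K-1)\,\lambda_g\sum_{m=0}^{b-1}K^m p_m .
\]

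\textbf{Step 2: the a priori bound.} Bound the gain term using $\lambda_g\leq 1$ and $\sum_{m\le b-1}K^m p_m\leq K^{b-1}$, so it is at most $(K-1)K^{b-1}=(1-K^{-1})K^b$. For the loss term, use $\sum_{m\geq a+1}K^m p_m \geq M - K^a$ (since $\sum_{m\le a}K^mp_m\le K^a$). Also use the uniform lower bound $\lambda_r\geq 1-\mu/b$ from Lemma \ref{lem:1}, which only needs the conserved mean $\mu$ (Lemma \ref{lem:invariant}). This yields
\[
M'(t) \leq (1-K^{-1})\Big[-(1-\tfrac{\mu}{b})\,(M(t)-K^a) + K^b\Big].
\]
The right-hand side is strictly negative whenever
\[
M > K^a + \frac{K^b}{1-\mu/b},
\]
and in particular whenever $M > \frac{K^{b+1}+K^a}{1-\mu/b}$, since this threshold is larger.

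\textbf{Step 3: the barrier argument.} Set $B = \max\{\frac{K^{b+1}+K^a}{1-\mu/b}, M(0)\}$. If $M(t_1)>B$ for some $t_1$, let $t_0=\sup\{t<t_1: M(t)\le B\}$. Then $M>B$ on $(t_0,t_1]$, so $M$ is decreasing there, which contradicts $M(t_1)>M(t_0)=B$. Therefore $M(t)\le B$ for all $t\ge 0$.

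\textbf{Main obstacle: justifying the differentiation.} The delicate step is legitimizing termwise differentiation of the infinite sum $M(t)$, including the summation by parts in Step 1. I would first work with truncated moments $M_N(t)=\sum_{n\le N}K^n p_n$. The same computation produces the formula above plus a boundary term of the form $\lambda_r K^N p_{N+1}\ge 0$, coming from the mass that enters level $N$ from above.

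The crude bound $M_N' \le (K-1)M_N + K^N p_{N+1}$ is then used together with Gronwall to obtain finiteness of $M(t)$ on bounded time intervals. One then checks that the boundary terms vanish as $N\to\infty$ in integrated form, using $K^{N}p_{N+1}\le K^{-1}\sum_{n>N}K^n p_n\to 0$ locally uniformly. Passing to the limit in the integrated version of the identity gives an absolutely continuous $M$ that satisfies the differential inequality of Step 2 almost everywhere. That is enough for the barrier argument of Step 3.
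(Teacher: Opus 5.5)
Your Steps 1--3 are the paper's proof. The paper computes the same identity for $y=\sum_n K^n p_n$ and bounds the gain term via $\lambda_g\le 1$. It bounds the term $\lambda_r(1-1/K)\sum_{n\le a}K^np_n$ by $(1-1/K)K^a$ and the loss term via the Markov bound $\lambda_r\ge 1-\mu/b$, then concludes from $y'\le 0$ above the threshold. Your estimate $\sum_{m\le b-1}K^mp_m\le K^{b-1}$ (the paper uses $K^b$) gives the slightly smaller threshold $K^a+K^b/(1-\mu/b)$. Your barrier argument makes explicit the paper's one-line conclusion.

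The paper differentiates the infinite sum termwise without comment, so your last paragraph goes beyond it. As written, though, its central step is circular. Gronwall applied to $M_N'\le (K-1)M_N+K^Np_{N+1}$ leaves the term $\int_0^t K^Np_{N+1}\,\dd s$. The only control you offer, $K^Np_{N+1}\le K^{-1}\sum_{n>N}K^np_n$, presupposes the finiteness of $M$ that you are trying to establish.

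What is missing is a bound on the boundary flux by the initial datum alone. For $N\ge b$, summing the first $N+1$ equations gives the exact finite identity $\frac{\dd}{\dd t}\sum_{n\le N}p_n=\lambda_r\,p_{N+1}$. Hence
\[
\int_0^t \lambda_r\,K^N p_{N+1}\,\dd s \le K^N\Big(1-\sum_{n\le N}p_n(0)\Big) \le K^{-1}\sum_{n\ge N+1}K^n p_n(0) \xrightarrow{N\to\infty} 0
\]
uniformly in $t$. Insert this into the integrated truncated identity and let $N\to\infty$, using monotone convergence on the loss term. This gives $M(t)\le M(0)+(K-1)K^{b-1}t$ together with the integrated form of your Step 1 identity. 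So $M$ is absolutely continuous and Steps 2--3 apply; no Gronwall step is needed.

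Alternatively, work with the capped weights $K^{\min(n,N)}$. They are constant across the cap, so no boundary term appears, and $y_N'\le (K-1)K^{b-1}$ gives finiteness directly.
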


\Proof Let $y(t) = \sum_{n\geq 0} K^n\,p_n(t)$. A straightforward computation of the time derivative of $y$ gives rise to
\begin{align*}
y' &= \lambda_g\,(K-1)\,\sum_{n\leq b-1} K^n\,p_n + \lambda_r\,(1/K - 1)\,\sum_{n\geq a+1} K^n\,p_n \\
&= \lambda_g\,(K-1)\,\sum_{n\leq b-1} K^n\,p_n + \lambda_r\,(1- 1/K)\,\sum_{n\leq a} K^n\,p_n - \lambda_r\,(1- 1/K)\,y \\
&\leq (K-1)\,K^b + (1- 1/K)\,K^a - \left(1-\frac{\mu}{b}\right)\,(1- 1/K)\,y.
\end{align*}
Consequently, we deduce that $y' \leq 0$ as long as
\[y \geq \frac{(K-1)\,K^b + (1- 1/K)\,K^a}{(1- 1/K)(1-\mu/b)} = \frac{K^{b+1}+K^a}{1-\mu/b},\]
from which the announced uniform bound \eqref{eq:uniform_exponential_moment} follows. \qed

\begin{remark}
It is worth emphasizing that the strategy employed in the proof of Lemma \ref{lem:2} will break down when $b \to \infty$ (as $K^{b+1}$ blows up when $b \to \infty$), and a more delicate argument will be required in order to establish a analogue of the uniform bound \eqref{eq:uniform_exponential_moment} in the asymptotic region as $b \to \infty$ (see for instance the recent work \cite{cao_interacting_2022} on the classical BDY model corresponding to $a = 0$ and $b \to \infty$).
\end{remark}

We also need the following elementary result from calculus.
\begin{lemma}\label{lem:3}
Let $\gamma \in (0,1)$, $k > 0$, and define $f(x) \coloneqq -x\,\ln x - k\,x$ for $x \in \mathbb{R}_+$. Then $f(x) \leq 0$ for all $x \in [\gamma, 1]$ if $k \geq -\ln \gamma$.
\end{lemma}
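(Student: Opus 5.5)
The statement is Lemma~\ref{lem:3}: for $\gamma\in(0,1)$ and $k\ge-\ln\gamma$, the function $f(x)=-x\ln x-kx$ satisfies $f\le 0$ on $[\gamma,1]$. The plan is to factor out $x$, which is positive on the interval, and reduce everything to a monotonicity statement about the logarithm.

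For $x>0$ we write $f(x)=x\,(-\ln x-k)$. Since $x>0$, the sign of $f(x)$ is the sign of $-\ln x-k$, so it suffices to show $-\ln x\le k$ for every $x\in[\gamma,1]$.

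The function $x\mapsto-\ln x$ is decreasing on $(0,\infty)$. On $[\gamma,1]$ it therefore attains its maximum at the left endpoint, giving
\[
-\ln x\le-\ln\gamma\le k \qquad \text{for all } x\in[\gamma,1],
\]
where the last step is exactly the hypothesis $k\ge-\ln\gamma$. Consequently $-\ln x-k\le 0$, and multiplying by $x>0$ yields $f(x)\le 0$ throughout $[\gamma,1]$.

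There is no real obstacle in this argument. The only points needing care are that $\gamma>0$, so the logarithm is defined on the whole interval and $x>0$ there, and that the hypothesis is used only at the worst point $x=\gamma$.

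This lemma is what later lets the term $-\lambda_r\log\lambda_r$ be absorbed in the proof of Theorem~\ref{thm:dissipation_of_E}. The lower bounds $\lambda_r\ge 1-\mu/b$ and $\lambda_g\ge 1/C_0$ (Lemmas~\ref{lem:1}--\ref{lem:2}) place $\lambda_r$ and $\lambda_g$ in intervals of the form $[\gamma,1]$, and the choices of $k_1,k_2$ in \eqref{eq:k1k2} are precisely $-\ln\gamma$ for the corresponding $\gamma$.
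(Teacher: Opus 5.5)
Your proof is correct, and it takes a slightly different, more economical route than the paper. The paper checks the endpoint value $f(\gamma)=-\gamma\ln\gamma-k\gamma\le 0$ and then shows $f$ is nonincreasing on $[\gamma,1]$ via $f'(x)=-\ln x-1-k\le 0$. You instead factor $f(x)=x\,(-\ln x-k)$ and determine the sign of the second factor pointwise.

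Both arguments rest on the same inequality, $-\ln x\le-\ln\gamma\le k$ for $x\ge\gamma$; the paper uses it to sign $f'(x)=(-\ln x-k)-1$. Once that inequality is available, your factorization concludes at once, so the derivative computation and the separate endpoint check are unnecessary.

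Your form also shows the threshold is sharp: since $f(\gamma)=\gamma\,(-\ln\gamma-k)$, the condition $k\ge-\ln\gamma$ is necessary as well as sufficient. The paper's route additionally gives monotonicity of $f$ on the interval. That is not needed when the lemma is applied to $-\lambda_r\ln\lambda_r-k_1\lambda_r$ and $-\lambda_g\ln\lambda_g-k_2\lambda_g$.
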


\Proof It suffices to notice that $f(\gamma) = -\gamma\,\ln \gamma - k\,\gamma \leq 0$ and $f'(x) = -\ln x - 1-k \leq 0$ for $x \in [\gamma, 1]$, whenever $k\geq -\ln \gamma$. \qed

Armed with the previous set of preliminary lemmas, we are now in a position to present the proof of Theorem \ref{thm:dissipation_of_E}.\\

\noindent{\bf Proof of Theorem \ref{thm:dissipation_of_E}} We first invoke the computation in Proposition \ref{ppo:decay_H_ab} to obtain the time derivative of the new functional $\widetilde{\mathcal{H}}_{ab}[{\bf p}]$:
\begin{eqnarray}
  \frac{\dd}{\dd t} \widetilde{\mathcal{H}}_{ab}[{\bf p}]  &\leq& - \lambda_r \log \lambda_r \sum_{n=b+1}^{\infty} p_n - \lambda_g \log \lambda_g \sum_{n=0}^{a-1} p_n \\
&+& k_1\sum\limits_{n=b+1}^{\infty} n\,p_n' + k_2\sum\limits_{n=0}^{a-1} (a-n)\,p_n'.
\end{eqnarray}
Next, we notice that for $0\leq n\leq a-1$, $\sum_{\ell=0}^n p'_\ell = -\lambda_g\,p_n$. Therefore,
\begin{equation}\label{eq:Term1}
\sum\limits_{0\leq n\leq a-1} (a-n)\,p'_n = \sum\limits_{n=0}^{a-1} \sum_{\ell=0}^n p'_\ell = -\lambda_g\,\sum\limits_{0\leq n\leq a-1} p_n \leq 0.
\end{equation}
On the other hand, we also have
\begin{equation}\label{eq:Term2}
\sum\limits_{n\geq b+1} n\,p'_n = -\lambda_r\,\left[\sum\limits_{n\geq b+1} p_n + b\,p_{b+1}\right] \leq  -\lambda_r\,\sum\limits_{n\geq b+1} p_n \leq 0.
\end{equation}
Assembling these estimates together leads us to
\begin{equation}\label{eq:derivative_of_E}
\begin{aligned}
\frac{\dd}{\dd t} \widetilde{\mathcal{H}}_{ab}[{\bf p}] &\leq \left[-\lambda_r\,\log \lambda_r-k_1\,\lambda_r\right]\,\sum_{n\geq b+1} p_n + \left[-\lambda_g\,\log \lambda_g-k_2\,\lambda_g\right]\,\sum_{n\leq a-1} p_n \\
&\leq 0,
\end{aligned}
\end{equation}
where the last inequality follows from Lemmas \ref{lem:1}-\ref{lem:3} together with the definitions of $k_1$ and $k_2$ \eqref{eq:k1k2}.

Moreover, it is not hard to verify that $\frac{\dd}{\dd t} \widetilde{\mathcal{H}}_{ab}[{\bf p}] = 0$ only when ${\bf p} = {\bf p}^*$. Finally, the large time convergence guarantee stated in Theorem \ref{thm:dissipation_of_E} can be shown in a manner similar to the standard mead-field BDY dynamics \eqref{eq:standard_BDY} (see for instance \cite{merle_cutoff_2019,cao_derivation_2021}). \qed

To investigate numerically the convergence of ${\bf p}(t)$ its equilibrium distribution ${\bf p}^*$, we set $μ=7$, $a = 5$ and $b = 10$ and use $1001$ components to approximate the distribution ${\bf p}(t)$ (i.e., ${\bf p}(t) \approx (p_0(t),…,p_{1000}(t))$). As an initial condition, we use $p_μ(0)=1$ and $p_i(0)=0$ for $i \neq μ$. The standard Runge-Kutta fourth-order method is used to discretize the mean-field ODE system \eqref{eq:law_limit}-\eqref{eq:two-classes} with the time step $Δt=0.01$.

We plot in Figure \ref{fig:conv_equilibrium}-left the numerical solution ${\bf p}$ for various $0\leq t\leq 5$ units of time and compare them with the equilibrium distribution ${\bf p}^*$. It is worth highlighting that the two distributions ${\bf p}(t=5)$ and ${\bf p}^*$ are almost indistinguishable. Indeed, as shown in Figure \ref{fig:conv_equilibrium}, the convergence toward the equilibrium distribution is exponential in time.


\begin{figure}[ht]
    \centering
    \includegraphics[width=.55\textwidth]{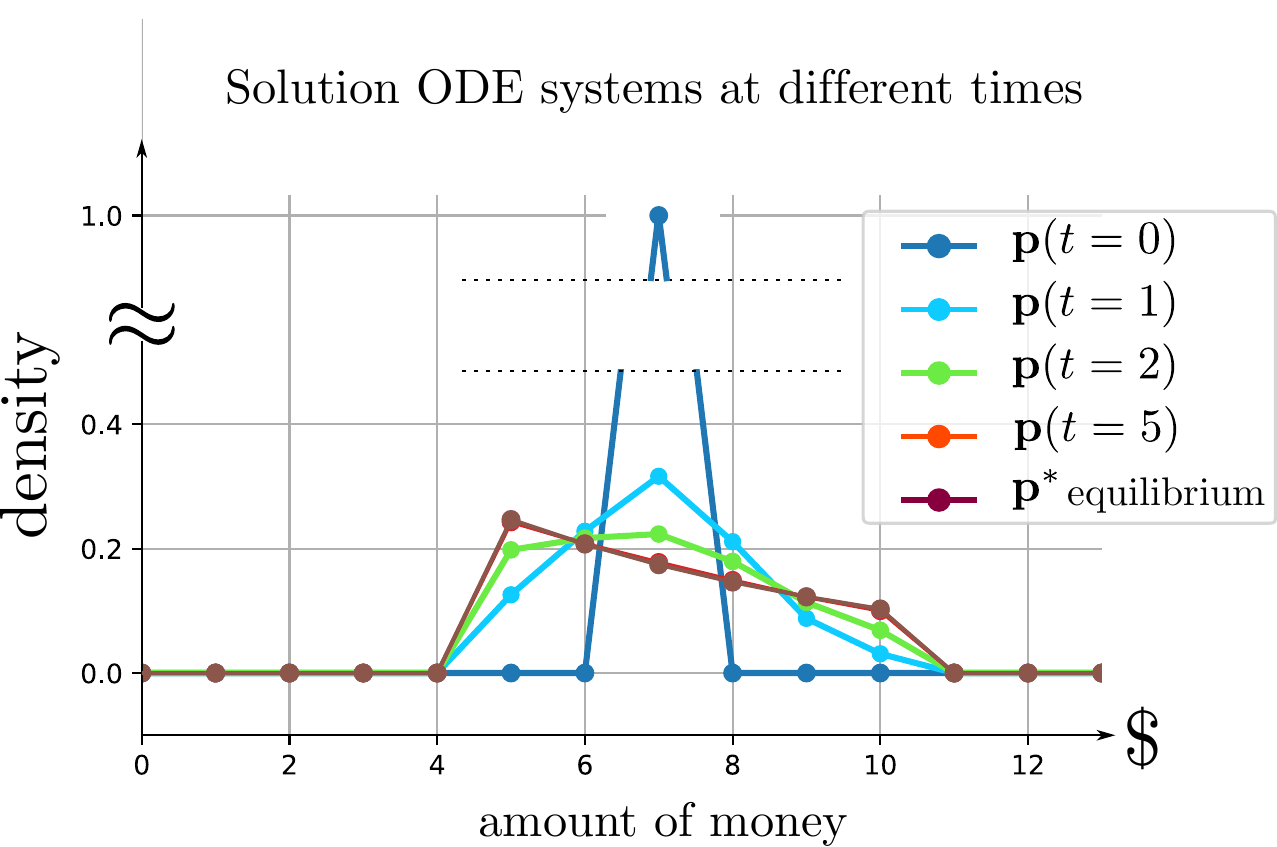}
    \includegraphics[width=.4\textwidth]{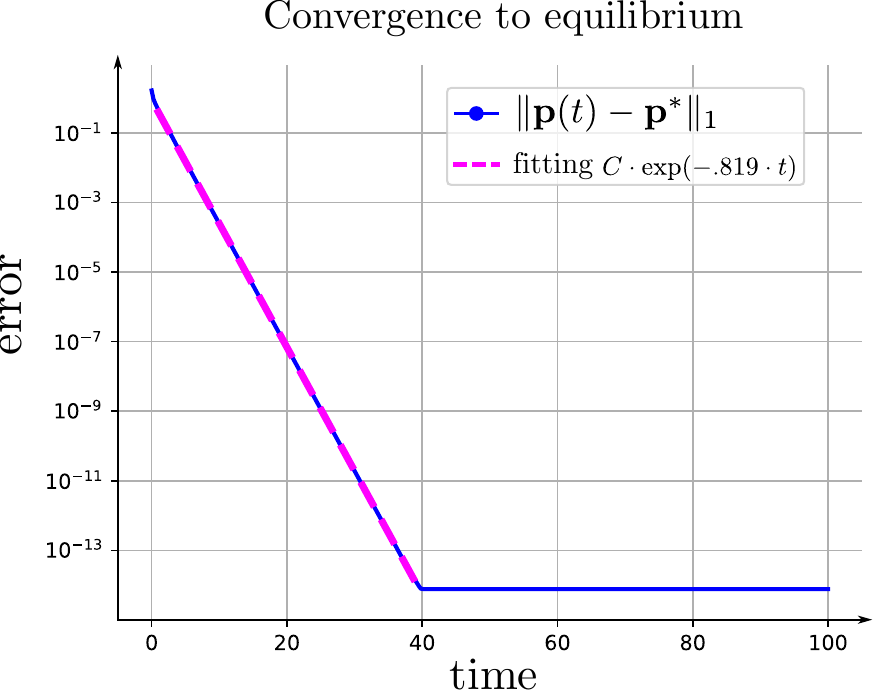}
    \caption{\textbf{Left:} Comparison between the numerical solution ${\bf p}(t)$ of the mean-field model \eqref{eq:law_limit}-\eqref{eq:two-classes} and the geometric-type equilibrium ${\bf p}^*$ for $0\leq t\leq 5$. \textbf{Right:} Exponential decay of the distance between ${\bf p}(t)$ toward the equilibrium ${\bf p}^*$.}
    \label{fig:conv_equilibrium}

\end{figure}

\subsection{Gini index comparison and mitigation of inequality}\label{subsec:3.2}

We would like to examine the impact of wealth flooring and ceiling on the wealth inequality of the distribution ${\bf p}$ measured by the Gini index, which is defined for any ${\bf p} \in V_\mu$ by
\begin{equation}\label{def:Gini}
G[{\bf p}] = \frac{1}{2\,\mu}\,\sum\limits_{i,j\geq 0} |i-j|\,p_i\,p_j.
\end{equation}
The Gini index is a widely used inequality index which ranges from $0$ (perfect equality) to $1$ (maximum inequality). In addition to its applications in econophysics models \cite{boghosian_h_2015,cao_derivation_2021,cao_explicit_2021}, the notion of Gini index has also found its application in many other fields beyond economics, such as sociophysics (opinion dynamics) \cite{cao_iterative_2024}, sparse representation of signals \cite{hurley_comparing_2009}, and the so-called dispersion process on complete graphs \cite{cao_sticky_2024}.

It is natural to expect that introducing a wealth floor or ceiling reduces wealth inequality relative to the standard BDY model. Formally, this intuition leads to the following conjecture, referred to as the \textbf{Gini index comparison conjecture}: given an initial condition ${\bf p}(0) \in V_\mu$, let ${\bf p}^{a,b}(t)$ denote the solution to the mean-field ODE system \eqref{eq:law_limit}-\eqref{eq:two-classes} at time $t$. Then for all $t > 0$, the Gini index $G[{\bf p}^{a,b}(t)]$ is non-increasing in $a < \mu$ and non-decreasing in $b > \mu$. To examine this conjecture numerically, we plot the evolution of the Gini index $G[{\bf p}^{a,b}]$ for various parameters $a$ and $b$, while keeping the average amount of money $\mu = 5$ fixed (see Figure \ref{fig:gini}). It is evident that the above conjecture is supported by numerical evidence.

Although we do not have a rigorous proof of the aforementioned Gini index comparison conjecture (which we will leave as an open problem for future research endeavors), its validity can be tested in the special case as $b \to \infty$ and $a\in \mathbb N$, when the solution ${\bf p}^{a,b}$ reaches its stationarity.
\begin{proposition}\label{prop:Gini_comparison_special_case}
Let $a \in \mathbb N$ and $b \to \infty$. Then the equilibrium solution $({\bf p}^{a,\infty})^*$ of the mean-field system \eqref{eq:law_limit} with ${\bf p}(0) \in V_\mu$ satisfies the following Gini index comparison principle
\begin{equation}\label{eq:Gini_comparison_special_case}
G[({\bf p}^{a',\infty})^*] \leq G[({\bf p}^{a,\infty})^*]
\end{equation}
whenever $a \leq a' < \mu$.
\end{proposition}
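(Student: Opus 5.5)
The plan is to compute the Gini index of the explicit equilibrium \eqref{eq:equi_b=infty} in closed form and then check that it is monotone in $a$. By \eqref{eq:equi_b=infty}, $({\bf p}^{a,\infty})^*$ is the law of $a+Y$, where $Y$ is geometric on $\mathbb N$ with mean $m \coloneqq \mu-a$. Its parameter is $q = \frac{m}{m+1}$, so $\mathbb P(Y=k)=(1-q)\,q^k$.

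\textbf{Step 1: reduce to the shifted geometric.} The Gini index \eqref{def:Gini} is $\frac{1}{2\mu}\,\mathbb E|X-X'|$ for independent copies $X,X'$. The mean absolute difference is invariant under a common shift, so
\[
G[({\bf p}^{a,\infty})^*] = \frac{1}{2\mu}\,\mathbb E|Y-Y'|,
\]
with $Y,Y'$ i.i.d.\ geometric of mean $m$.

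\textbf{Step 2: compute $\mathbb E|Y-Y'|$.} Use the identity
\[
\mathbb E|Y-Y'| = 2\sum_{k\geq 1}\mathbb P(Y\geq k)\,\mathbb P(Y<k).
\]
Alternatively, compute directly $\mathbb E|Y-Y'|=2\,\mathbb E[(Y-Y')_+]$, using memorylessness: conditionally on $Y>Y'$, the excess $Y-Y'$ is $1$ plus a geometric variable. Since $\mathbb P(Y\geq k)=q^k$, the first route gives
\[
\sum_{k\ge1} q^k(1-q^k) = \frac{q}{1-q}-\frac{q^2}{1-q^2} = \frac{q}{1-q^2}.
\]
With $q=\frac{m}{m+1}$ we have $1-q^2=\frac{2m+1}{(m+1)^2}$, hence
\[
\mathbb E|Y-Y'| = \frac{2m(m+1)}{2m+1}, \qquad G[({\bf p}^{a,\infty})^*] = \frac{m(m+1)}{\mu\,(2m+1)},\quad m=\mu-a.
\]
As a sanity check, $a=0$ gives $G=\frac{\mu+1}{2\mu+1}$, the known Gini index of the BDY geometric equilibrium.

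\textbf{Step 3: monotonicity.} Since $\mu$ is fixed, it suffices to show that $h(m)=\frac{m(m+1)}{2m+1}$ is nondecreasing in $m>0$. A direct computation gives
\[
h'(m)=\frac{(2m+1)^2-2m(m+1)}{(2m+1)^2}=\frac{2m^2+2m+1}{(2m+1)^2}>0 .
\]
Now $a\le a'<\mu$ means $m'=\mu-a'\le m=\mu-a$, and both are positive integers. Therefore $h(m')\le h(m)$, which is exactly \eqref{eq:Gini_comparison_special_case}.

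\textbf{Main obstacle.} The computations themselves are elementary. The delicate point is justifying that $({\bf p}^{a,\infty})^*$ is indeed given by \eqref{eq:equi_b=infty}, i.e.\ that it is the limit as $b\to\infty$ of the equilibrium \eqref{eq:equi}. This amounts to showing that the root $\overbar r$ of \eqref{eq:poly} converges to $\frac{\mu-a}{\mu-a+1}$. To do this, divide \eqref{eq:poly} by $(x-1)^2$ as in Proposition~\ref{prop:polyroot}. For large $b$ we are in case (i), so the root lies in $(0,1)$, and there the terms $x^{b+1-a}$ and $x^{b+2-a}$ vanish in the limit. The limiting equation is $(\mu-a+1)x+a-\mu=0$, whose solution is $x=\frac{\mu-a}{\mu-a+1}$. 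I plan to take the stated formula \eqref{eq:equi_b=infty} from the Remark as given and spend the effort on Steps 2 and 3.
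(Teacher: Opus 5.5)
Your proposal is correct and follows the same route as the paper: take the explicit equilibrium \eqref{eq:equi_b=infty} as given, compute the Gini index in closed form, and check that it is monotone in $a$. Your expression $\frac{m(m+1)}{\mu(2m+1)}$ with $m=\mu-a$ is algebraically identical to the paper's $1-\frac{\mu^2-a^2+a}{\mu[1+2(\mu-a)]}$. Your shift-invariance and tail-sum computation simply supplies the ``routine but tedious'' step the paper omits.
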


\Proof The key ingredient in the proof is the completely explicit expression of the equilibrium distribution $({\bf p}^{a,\infty})^*$ provided by \eqref{eq:equi_b=infty}. After a routine but tedious computation, we obtain
\begin{equation}\label{eq:Gini_explicit}
G[({\bf p}^{a,\infty})^*] = 1 - \frac{\mu^2 - a^2 + a}{\mu\,[1+2\,(\mu-a)]}.
\end{equation}
It is worth noticing that in the special case where $a=0$, \eqref{eq:Gini_explicit} gives us $G[({\bf p}^{0,\infty})^*] = \frac{1+\mu}{1+2\mu}$, which coincides with the Gini index of a geometric distribution whose success probability equal to $1 - ({\bf p}^{0,\infty})^*_0 = \frac{\mu}{1+\mu}$. To finish the proof, it suffices to note that the map $a \mapsto \frac{\mu^2 - a^2 + a}{\mu\,[1+2(\mu-a)]}$ is a non-decreasing function for all $0 \leq a < \mu$. \qed

\begin{figure}[ht]
    \centering
    \includegraphics[width=.95\textwidth]{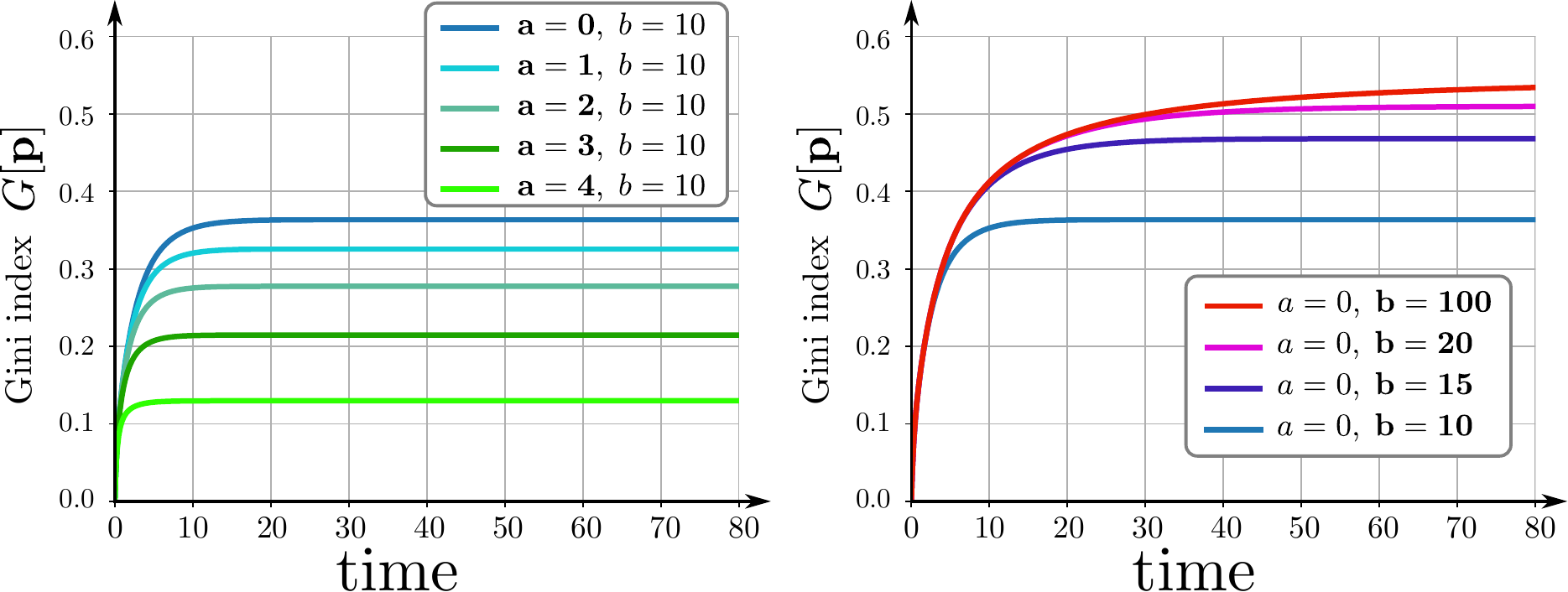}
    \caption{Evolution of the Gini index for different values of $a$ and $b$. As the wealth floor $a$ increases (left panel), the gap between agents decreases, leading to a reduction in the Gini index. Conversely, increasing the wealth ceiling $b$ accentuates inequality, resulting in a substantial rise in the Gini index (right panel). Parameters: $\Delta t=10^{−1}$ and $\mu=5$.}
    \label{fig:gini}

\end{figure}

\section{Conclusion}\label{sec:4}
\setcounter{equation}{0}

In this work, we introduce and analyze a natural extension of the classical Bennati–Dragulescu–Yakovenko (BDY) binary exchange model \cite{dragulescu_statistical_2000}, incorporating both wealth flooring and ceiling effects. Although these additional constraints complicate the analysis at the agent-based level, we rigorously derive a corresponding mean-field system of ODEs that facilitates the study of its long-time behavior through a suitably constructed entropy-like Lyapunov functional. We further examine how the parameters $a$ and $b$ influence wealth inequality. In particular, we conjecture, supported by numerical evidence presented in section \ref{subsec:3.2}, that imposing wealth floor and ceiling constraints mitigates inequality, as measured by the Gini index, compared to the original BDY model. However, a rigorous proof of this conjecture remains open.

To the best of our knowledge, no mean-field analysis has been developed for the present model prior to this study. The results obtained here open several promising avenues for further research. A natural next step would be to establish quantitative decay estimates for the generalized entropy $\widetilde{\mathcal{H}}_{ab}[{\bf p}]$ at the mean-field level, thereby providing a sharper characterization of convergence to equilibrium. Another important direction is to rigorously verify the monotonicity of the Gini index of ${\bf p}(t)$ with respect to the parameters $a$ and $b$ for a fixed mean wealth $\mu$. Progress on these fronts would yield a deeper and more quantitative understanding of how wealth floor and ceiling constraints shape the long-term behavior and inequality structure of the system.

Finally, we emphasize that econophysics models incorporating wealth floor and ceiling mechanisms have received relatively little attention in recent years, despite their clear relevance to real-world economic policy. In practice, wealth floors can be interpreted as social safety nets or minimum-income guarantees, while wealth ceilings may reflect progressive taxation or redistribution policies designed to limit excessive accumulation. From this perspective, the model investigated here provides a simple yet insightful theoretical framework for exploring how such constraints influence collective wealth dynamics and wealth inequality. Our results suggest that even minimal forms of floor and ceiling enforcement can significantly affect the long-term distributional outcomes, motivating a more systematic study of constrained exchange models as a bridge between statistical physics and socio-economic policy design. \\

\noindent {\bf Acknowledgement~} Fei Cao gratefully acknowledges support from an AMS-Simons Travel Grant, administered by the American Mathematical Society with funding from the Simons Foundation.

\appendix

\section{Appendix: Proof of Theorem~\ref{thm:PoC}}\label{sec:Appendix}

We aim to establish the quantitative and finite-time propagation of chaos estimate \eqref{eq:PoC}, building on techniques developed in recent works on the classical BDY dynamics and its biased version \cite{cao_derivation_2021,merle_cutoff_2019}. First, we notice that when agent $i$ (with $S_i > a$)  gives a dollar to $j$ (with $S_j < b$), the empirical measure experiences the following transform:
\begin{equation}
  \label{eq:jump_emp}
  {\bf p}_{\emp}\;\;\begin{tikzpicture} \draw [->,decorate,decoration={snake,amplitude=.4mm,segment length=2mm,post length=1mm}]
    (0,0) -- (.6,0);\end{tikzpicture}\;\; {\bf p}_{\emp} + \frac{1}{N}\Big(δ_{S_i-1} + δ_{S_j+1}- δ_{S_i}- δ_{S_j}\Big) \quad (\text{if } S_i > a ~\text{and}~ S_j < b).
\end{equation}
Due to the settings of the agent-based model \eqref{dynamics:modified_BDY}, the evolution of the empirical law ${\bf p}_{\emp}$ is governed by the following stochastic differential equation (SDE):
\begin{equation}\label{eq:empirical_SDE}
\dd {\bf p}_{emp}(t) = \frac{1}{N}\,∑_{k=1,\ell=0}^{+∞} \left(δ_{k-1} + δ_{\ell+1}- δ_{k}- δ_{\ell}\right) \dd \mathrm{N}_t^{(k,\ell)},
\end{equation}
in which $\{\mathrm{N}_t^{(k,\ell)}\}_{1 \leq k,\ell \leq N}$ denotes a
collection of independent Poisson clocks with intensity
\begin{equation}\label{eq:lambda_kl}
λ^{(k,\ell)} = p_{\emp,k}\,\left(N\,\rho_{\emp,\ell}-\mathbbm{1}_{\{k=\ell\}}\right)\,\mathbbm{1}_{\{k \geq a+1\}}\,\mathbbm{1}_{\{\ell \leq b-1\}}.
\end{equation}
We claim that the infinitesimal generator $\mathcal{D} \colon \ell^1(\mathbb N) \to \ell^1(\mathbb N)$ associated to the modified BDY model \eqref{dynamics:modified_BDY} is given by
\begin{equation}\label{eq:generator}
\mathcal{D} = \mathcal{L} + \frac{1}{N}\,\mathcal{R},
\end{equation}
where $\mathcal{L}$ is the nonlinear operator appearing in the mean-field ODE system \eqref{eq:law_limit} and the operator $\mathcal{R}$ is defined by
\begin{equation}\label{eq:R}
\mathcal{R}[{\bf p}]_k \coloneqq 2\,p_k\,\mathbbm{1}_{\{a+1 \leq n\leq b-1\}} - p_{k-1}\,\mathbbm{1}_{\{a+2 \leq n\leq b\}} - p_{k+1}\,\mathbbm{1}_{\{a \leq n\leq b-2\}}
\end{equation}
for each $k\in \mathbb N$. To see \eqref{eq:generator}, it suffices to notice that for any continuous and bounded test function $\varphi$, we have
\begin{equation*}
\begin{aligned}
&\frac{\dd}{\dd t} \mathbb{E}\left[\langle {\bf p}_{\emp}, \varphi\rangle\right] = \mathbb{E}\left[\sum_{k=a+1}^\infty \sum_{\ell=0}^{b-1} \left(\varphi(k-1)+\varphi(\ell+1)-\varphi(k)-\varphi(\ell)\right)\,{\bf p}_{\emp,k}\,{\bf p}_{\emp,\ell}\right] \\
&\qquad \qquad \qquad \qquad \qquad -\frac{1}{N}\,\mathbb{E}\left[\sum_{k=a+1}^{b-1} \left(\varphi(k-1) + \varphi(k+1) - 2\,\varphi(k)\right)\,{\bf p}_{\emp,k}\right] \\
&= \sum_{\ell=0}^{b-1} {\bf p}_{\emp,\ell}\,\mathbb{E}\left[\sum_{k=a+1}^\infty \left(\varphi(k-1)-\varphi(k)\right)\,{\bf p}_{\emp,k}\right] + \sum_{k=a+1}^\infty {\bf p}_{\emp,k}\,\mathbb{E}\left[\sum_{\ell=0}^{b-1} \left(\varphi(\ell+1)-\varphi(\ell)\right)\,{\bf p}_{\emp,\ell}\right] \\
&\qquad -\frac{1}{N}\,\mathbb{E}\left[\sum_{k=a+1}^{b-1} \left(\varphi(k-1) + \varphi(k+1) - 2\,\varphi(k)\right)\,{\bf p}_{\emp,k}\right] \\
&= \mathbb{E}\left[\langle \mathcal{L}[{\bf p}_{\emp}], \varphi\rangle\right] + \frac{1}{N}\,\mathbb{E}\left[\langle \mathcal{R}[{\bf p}_{\emp}], \varphi\rangle\right] = \mathbb{E}\left[\langle \mathcal{D}[{\bf p}_{\emp}], \varphi\rangle\right].
\end{aligned}
\end{equation*}
Intuitively, the operator $\mathcal{R}$ \eqref{eq:R} captures the (average) deviation in the dynamics of the empirical measure ${\bf p}_{\emp}(t)$ from those of the mean-field limit
${\bf p}(t)$ governed by \eqref{eq:law_limit}-\eqref{eq:two-classes}. Such deviations diminish
as the population size $N$ grows large. Thanks to Dynkin's formula, the compensated process of the empirical measure ${\bf p}_{\emp}(t)$, defined by
\begin{equation}\label{eq:M}
M(t) \coloneqq {\bf p}_{\emp}(t) - {\bf p}_{\emp}(0) - \int_0^t \mathcal{D}[{\bf p}_{\emp}]\,\dd s
\end{equation}
is a $\ell^1(\mathbb N)$-valued martingale. We now control the size of this compensated martingale, for which we first establish the following preliminary result:

\begin{lemma}\label{lem:A1}
Assume that ${\bf x} = (x_0,x_1,\cdots) \in \ell^1(\mathbb N)$ satisfies $\sum_{k\geq 0} (k+1)\,|x_k| \leq C$ and $\sum_{k\geq 0} (k+1)\,|x_k|^2 \leq \frac{C}{N}$ for some constants $C > 0$ and $N \geq 1$. Then there exists a universal constant $L > 0$ (independent of $N$ and ${\bf x}$) such that
$\|x\|_{\ell^1} \leq \frac{L\,\sqrt{\log N}}{\sqrt{N}}$.
\end{lemma}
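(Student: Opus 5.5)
Split the $\ell^1$ norm at a cutoff index $K$, to be fixed later, and bound the low and high parts separately:
\[
\|{\bf x}\|_{\ell^1} = \sum_{k=0}^{K-1} |x_k| + \sum_{k\geq K} |x_k|.
\]
For the high part we use the first moment assumption: since $k+1 > K$ for $k\geq K$, we get $\sum_{k\geq K}|x_k| \leq \frac{1}{K}\sum_{k\geq 0}(k+1)|x_k| \leq \frac{C}{K}$. For the low part we apply Cauchy--Schwarz with the weights $(k+1)$:
\[
\sum_{k=0}^{K-1}|x_k| = \sum_{k=0}^{K-1}\frac{1}{\sqrt{k+1}}\,\sqrt{k+1}\,|x_k| \leq \Big(\sum_{k=0}^{K-1}\frac{1}{k+1}\Big)^{1/2}\Big(\sum_{k\geq 0}(k+1)|x_k|^2\Big)^{1/2} \leq \sqrt{1+\log K}\,\sqrt{\frac{C}{N}}.
\]
Here we used the harmonic-sum bound $\sum_{j=1}^K 1/j \leq 1+\log K$.

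Adding the two pieces gives
\[
\|{\bf x}\|_{\ell^1} \leq \frac{C}{K} + \sqrt{\frac{C\,(1+\log K)}{N}}.
\]
Now choose $K = \lceil \sqrt{N}\rceil$. Then $C/K \leq C/\sqrt{N}$ and $1+\log K \leq 1 + \tfrac12\log N + \log 2$. So for $N\geq 3$, where $\log N \geq 1$, both terms are at most a constant multiple of $\sqrt{\log N}/\sqrt{N}$. The small cases $N=1,2$ are absorbed by enlarging the constant, using the trivial bound $\|{\bf x}\|_{\ell^1}\leq C$. With a slightly finer choice one could even take $K = \sqrt{N/\log N}$, but $\sqrt{N}$ is sufficient.

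The only subtle point is the meaning of \emph{universal}. The resulting constant $L$ depends on $C$, specifically as $L \sim C + \sqrt{C}$, but not on $N$ or ${\bf x}$. The lemma should therefore be read as $L = L(C)$. This is harmless for its application: in the propagation of chaos argument, $C$ will be a fixed quantity coming from the mean bound, namely $(\mu+1)$ times a numerical factor, which enters the constant in \eqref{eq:PoC}. Balancing the two terms through the choice of $K$ is where the $\sqrt{\log N}$ loss originates, and this balancing is the step that requires care; everything else is routine.
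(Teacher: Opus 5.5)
Your argument is correct and is essentially the paper's proof. You use the same split of the $\ell^1$ norm at a cutoff, the same weighted Cauchy--Schwarz producing the harmonic sum $\sum 1/(k+1)$, and the same first-moment tail bound; the paper cuts at $M=\lfloor N\rfloor$ while you cut at $\lceil\sqrt{N}\rceil$, which works equally well and likewise gives $L\sim C+\sqrt{C}$.

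One correction: the case $N=1$ cannot be absorbed by enlarging $L$, because $\sqrt{\log 1}=0$ makes the right-hand side vanish. For instance, ${\bf x}=(1/2,0,0,\dots)$ with $C=1$ satisfies both hypotheses but has $\|{\bf x}\|_{\ell^1}=1/2$. So the lemma, and the paper's proof just as much as yours, holds only for $N\geq 2$, or with $\sqrt{1+\log N}$ in place of $\sqrt{\log N}$.
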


\begin{proof}
For each $M\geq 1$, we apply the Cauchy-Schwarz inequality to obtain
\begin{equation*}
\begin{aligned}
\sum_{k\geq 0} |x_k| &= \sum_{k=0}^M |x_k| + \sum_{k=M+1}^\infty |x_k| \\
&= \left(\sum_{k=0}^M \frac{1}{k+1}\right)^{\frac 12}\left(\sum_{k=0}^M (k+1)\,|x_k|^2\right)^{\frac 12} + \frac{1}{M+2}\,\sum_{k=M+1}^\infty (k+1)\,|x_k| \\
&\leq \left(\sum_{k=1}^{M+1} \frac{1}{k}\right)^{\frac 12} \left(\sum_{k\geq 0}(k+1)\,|x_k|^2\right)^{\frac 12} + \frac{1}{M+2}\,\sum_{k\geq 0} (k+1)\,|x_k| \\
&\leq \left(\frac{C}{N}\right)^{\frac 12}\left(\sum_{k=1}^{M+1} \frac{1}{k}\right)^{\frac 12} + \frac{C}{M+2}.
\end{aligned}
\end{equation*}
Therefore, taking $M$ to be the integer part of $N$ and using the bound $\sum_{k=1}^{M+1} \tfrac{1}{k} \leq 1+\log (M+2)$,  we arrive at the desired estimate.
\end{proof}

\begin{proposition}\label{prop:martingale_control}
Let $M(t)$ \eqref{eq:M} be the compensated process of the empirical measure ${\bf p}_{\emp}(t)$. There exists some universal constant $C > 0 $ such that
\begin{equation}\label{eq:martingale_bound}
\mathbb{E}\left[\|M(t)\|_{\ell^1}\right] \leq \frac{C\,t\,\sqrt{\log N}}{\sqrt{N}}~~\text{for all $t\geq 0$.}
\end{equation}
\end{proposition}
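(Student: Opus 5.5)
The plan is to apply Lemma~\ref{lem:A1} componentwise to the martingale $M(t)$, taking $x_k = M_k(t)$, and to get the two weighted bounds it requires from an $L^1$ moment estimate and an $L^2$ (quadratic variation) estimate. Lemma~\ref{lem:A1} is deterministic, so I will use it in expectation. Splitting at a level $K$, Cauchy--Schwarz and Jensen give
\[
\mathbb{E}\|M(t)\|_{\ell^1} \le \Big(\sum_{k=1}^{K+1}\tfrac1k\Big)^{1/2}\Big(\sum_{k\ge0}(k+1)\,\mathbb{E}|M_k(t)|^2\Big)^{1/2} + \frac{1}{K+2}\sum_{k\ge0}(k+1)\,\mathbb{E}|M_k(t)|.
\]
It therefore suffices to show two things:
\[
\sum_k (k+1)\,\mathbb{E}|M_k(t)|^2 \le \frac{Ct}{N}
\qquad\text{and}\qquad
\sum_k (k+1)\,\mathbb{E}|M_k(t)| \le C(1+t).
\]
Then choosing $K\approx N$ produces $\sqrt{\log N/N}$. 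The factor $t$ in \eqref{eq:martingale_bound}, rather than $\sqrt t$, is harmless for $t\ge1$. For small $t$ the two bounds give $\sqrt{t\log N/N}$ plus $C(1+t)/N$, so I may need to adjust constants, or simply use the weaker form the paper states.

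For the first moment bound I would use the triangle inequality on \eqref{eq:M}: $|M_k|\le p_{\emp,k}(t)+p_{\emp,k}(0)+\int_0^t|\mathcal{D}[{\bf p}_{\emp}]_k|\,\dd s$. Weighting by $(k+1)$, the first two terms give $1+\mu$ each, since the mean $\mu$ is conserved by the particle system as well. In the integral, every term of $\mathcal{L}$ and $\mathcal{R}$ is a shift of ${\bf p}_{\emp}$ multiplied by rates $\lambda_r,\lambda_g\le 1$. Hence $\sum_k(k+1)|\mathcal{D}[{\bf p}_{\emp}]_k| \le C(1+\mu)$, which gives $C(1+\mu)(1+t)$. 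Here $C$ depends on $\mu$; I treat $\mu$ as fixed, so this is ``universal''.

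For the second moment bound I would use the Itô isometry for jump martingales driven by the SDE \eqref{eq:empirical_SDE}: $\mathbb{E}|M_k(t)|^2=\mathbb{E}\int_0^t\sum_{k',\ell}\lambda^{(k',\ell)}\,|\Delta_{k}^{(k',\ell)}|^2\,\dd s$. Each jump changes component $k$ by at most $2/N$, and only when $k\in\{k'-1,k',\ell,\ell+1\}$. The total jump rate is of order $N$, because $\lambda^{(k',\ell)} \le N\,p_{\emp,k'}\,p_{\emp,\ell}$. Summing $(k+1)\cdot 4/N^2$ over the affected $k$, against the rate, gives at most
\[
\frac{C}{N}\sum_{k',\ell}p_{\emp,k'}\,p_{\emp,\ell}\,(k'+\ell+2) \le \frac{C(1+\mu)}{N},
\]
again using the conserved mean. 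Integrating in time yields $Ct/N$.

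The main obstacle I expect is bookkeeping in the quadratic variation: the jumps are vector-valued, and one must correctly identify which coordinates move and with what rate. The diagonal correction $\mathbbm{1}_{\{k=\ell\}}$ in \eqref{eq:lambda_kl} only lowers the rate, so it can be dropped. The normalization also needs care: the rate of a $(k',\ell)$ pair is $N\,p_{\emp,k'}\,p_{\emp,\ell}$ times the overall clock rate. I will check it against the generator computation preceding \eqref{eq:generator}, which confirms that the drift is $O(1)$ and the rate is $O(N)$, each jump having size $O(1/N)$.
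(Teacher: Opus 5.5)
Your route is essentially the paper's. You bound $\sum_k (k+1)\,\mathbb{E}|M_k(t)|$ and $\sum_k (k+1)\,\mathbb{E}|M_k(t)|^2$ using jumps of size $O(1/N)$, rates $O(N)$ and the conserved mean, and then run the splitting argument of Lemma~\ref{lem:A1} in expectation. The paper leaves that last step implicit ("follows readily"); your Cauchy--Schwarz/Jensen version is the right way to make it precise. The one technical difference is the first moment. You use the triangle inequality on \eqref{eq:M}, which gives $C(1+t)$ and leaves an additive $C(1+t)/N$ that does not vanish as $t\to0$. The paper instead bounds $\mathbb{E}|M_k(t)|$ by jump size times integrated jump rate, which gives $Ct$.

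Where your proposal goes wrong is the small-$t$ remark. Your estimates give $\mathbb{E}\|M(t)\|_{\ell^1}\le C\sqrt{t\log N/N}+C(1+t)/N$, and for $t<1$ this does \emph{not} imply \eqref{eq:martingale_bound}. The linear-in-$t$ form the paper states is stronger than yours there, not weaker, and no choice of constants turns $\sqrt t$ into $t$ as $t\to 0$.

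In fact no argument can prove the stated bound for small $t$. Suppose all agents start at $\mu$ (recall $a<\mu<b$). For $1/N\ll t\ll 1$, the component $M_\mu(t)$ is essentially a compensated Poisson sum of about $Nt$ jumps of size $2/N$. Hence $\mathbb{E}|M_\mu(t)|\asymp\sqrt{t/N}$, which exceeds $Ct\sqrt{\log N/N}$ once $t\ll 1/\log N$.

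The paper's proof has the same limitation: putting $Ct$ in place of $C$ in the proof of Lemma~\ref{lem:A1} yields $\sqrt{Ct\,(1+\log(N+2))/N}+Ct/(N+2)$. So what both arguments actually establish is $\mathbb{E}\|M(t)\|_{\ell^1}\le C(\sqrt t+t)\sqrt{\log N}/\sqrt N$, i.e.\ the stated bound only for $t\ge 1$. State that corrected bound explicitly rather than appealing to a change of constants. It is all the Gr\"onwall step needs, and it gives Theorem~\ref{thm:PoC} with $t$ replaced by $\sqrt t+t$.
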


\begin{proof}
We note that the $k$-th component $M_k$ forms a continuous-time martingale,
whose jumps have magnitude at most $2/N$ and occur at rate bounded by
$N \left( 2\,{\bf p}_{\emp,k} + {\bf p}_{\emp,k-1} + {\bf p}_{\emp,k+1} \right)\,\dd t$. Therefore,
\begin{equation*}
\begin{aligned}
\mathbb{E}[|M_k(t)|] &\leq \frac{2}{N}\,\int_0^t N \left( 2\,{\bf p}_{\emp,k}(s) + {\bf p}_{\emp,k-1}(s) + {\bf p}_{\emp,k+1}(s) \right)\,\dd s,\\
\mathbb{E}[|M_k(t)|]^2 &\leq \frac{4}{N}\,\int_0^t N \left( 2\,{\bf p}_{\emp,k}(s) + {\bf p}_{\emp,k-1}(s) + {\bf p}_{\emp,k+1}(s) \right)\,\dd s.
\end{aligned}
\end{equation*}
Thanks to the conservation of total wealth, expressed by the fact that $\sum_{k\geq 0} k\,{\bf p}_{\emp,k}(t) = \mu$ for all $t\geq 0$, we deduce that
\begin{equation}\label{eq:M_estimates}
\sum_{k\geq 0} (k+1)\,\mathbb{E}[|M_k(t)|] \leq C\,t ~~\text{and}~~ \sum_{k\geq 0} (k+1)\,\mathbb{E}[|M_k(t)|^2] \leq \frac{C\,t}{N}
\end{equation}
for some universal constant $C > 0$. Finally, the desired estimate \eqref{eq:martingale_bound} follows readily from \eqref{eq:M_estimates} and Lemma \ref{lem:A1}.
\end{proof}

Next, a straightforward application of the triangle inequality shows that
the operator $\mathcal{L}$ is globally Lipschitz on
$\ell^1(\mathbb N)\cap \mathcal{P}(\mathbb N)$, and
$\mathcal{R}$ defines a bounded operator on $\ell^1(\mathbb N)$:
\begin{eqnarray}\label{eq:lip_L}
\|\mathcal{L}[{\bf p}]-\mathcal{L}[{\bf q}]\|_{\ell^1} &\leq& 8\,\|{\bf p}-{\bf q}\|_{\ell^1} \quad \text{for any } {\bf p},{\bf q}\in\ell^1(\mathbb N)\cap \mathcal{P}(\mathbb N)\\
\label{eq:bound_R}
\|\mathcal{R}[{\bf p}]\|_{\ell^1} &\leq& 4\,\|{\bf p}\|_{\ell^1} \!\quad\qquad \text{for any } {\bf p}\in \ell^1(\mathbb N)
\end{eqnarray}
To advance the proof of Theorem \ref{thm:PoC}, we first recall the evolution
equations satisfied by the empirical measure ${\bf p}_{\emp}$ and by
the mean-field law ${\bf p}(t)$, respectively:
 \begin{eqnarray*}
    {\bf p}(t) &=& {\bf p}(0) + \int_{0}^t \mathcal{L}[{\bf p}(s)]\,\dd s \\
    {\bf p}_{\emp}(t) &=& {\bf p}_{\emp}(0) + \int_{0}^t \mathcal{L}[{\bf p}_{\emp}(s)]\,\dd s + \frac{1}{N}\int_{0}^t \mathcal{R}[{\bf p}_{\emp}(s)]\,\dd s + M(t)
  \end{eqnarray*}
Setting $\mathcal{E}(t) = \mathbb{E}\left[\|{\bf p}(t) - {\bf p}_{\emp}(t)\|_{\ell^1}\right]$ and employing the estimates \eqref{eq:lip_L}--\eqref{eq:bound_R}, we obtain
\begin{align*}
\mathcal{E}(t) &\leq \mathcal{E}(0) + \int_{0}^t \mathbb{E}\left[\|\mathcal{L}[{\bf p}(s)]-\mathcal{L}[{\bf p}_{\emp}(s)]\|_{\ell^1}\right]\,\dd s +\frac{1}{N} ∫_{0}^t \mathbb{E}\left[\|\mathcal{R}[{\bf p}_{emp}(s)]\|_{\ell^1}\right]\,\dd s \\
&\qquad + \mathbb{E}\left[\|M(t)\|_{\ell^1}\right]\\
&\leq \mathcal{E}(0) + 8\int_{0}^t \mathbb{E}\left[\|{\bf p}_{emp}(s)-{\bf p}(s)\|_{\ell^1}\right]\,\dd s +\frac{4\,t}{N} + \mathbb{E}\left[\|M(t)\|_{\ell^1}\right] \\
&\leq \mathcal{E}(0) + 8\int_{0}^t \mathcal{E}(s)\,\dd s + \frac{4\,t}{N} + \frac{C\,t\,\sqrt{\log N}}{\sqrt{N}}.
\end{align*}
Consequently, the proof of Theorem \ref{thm:PoC} is completed thanks to a routine application of Gr\"{o}nwall's inequality.

\end{document}